\newtheorem{theo}{Theorem}
\newtheorem{lemme}{Lemma}
\newtheorem{defi}{Definition}
\newtheorem{hyp}{Assumption}
\newtheorem{coro}{Corollary}
\numberwithin{lemme}{section}
\numberwithin{theo}{section}
\numberwithin{defi}{section}
\numberwithin{coro}{section}
\newcommand{\RR}{\mathbb{R}}
\newcommand{\NN}{\mathbb{N}}
\newcommand{\Ss}{\mathbb{S}^{d-1}}
\newcommand{\vp}{\varphi}
\newcommand{\vr}{\varrho}
\newcommand{\ep}{\varepsilon}
\newcommand{\Ep}{^{\ep}}
\newcommand{\Oep}{\Omega_{\varepsilon}}
\newcommand{\Eep}{E^{\varepsilon}}
\newcommand{\Nep}{N^{\varepsilon}}
\newcommand{\Ob}{\overline{\Omega}}
\newcommand{\Mc}{\mathcal{M}}
\newcommand{\Pc}{\mathcal{P}}
\newcommand{\Lc}{\mathcal{L}}
\newcommand{\Gc}{\mathcal{G}}
\newcommand{\Fc}{\mathcal{F}}
\newcommand{\Hc}{\mathcal{H}}
\newcommand{\Qc}{\mathcal{Q}}
\newcommand{\Gf}{\mathbf{G}}
\newcommand{\Ff}{\mathbf{F}}
\newcommand{\OS}{\Omega \times \Ss}
\newcommand{\ObS}{\Ob \times \Ss}
\newcommand{\IO}{\int_{\Omega}}
\newcommand{\IOb}{\int_{\Ob}}
\newcommand{\IOS}{\int_{\Omega \times \Ss}}
\newcommand{\bal}{\begin{align*}}
\newcommand{\eal}{\end{align*}}
\newcommand{\ble}{\begin{lemme}}
\newcommand{\ele}{\end{lemme}}
\newcommand{\bpr}{\begin{proof}}
\newcommand{\epr}{\end{proof}}
\newcommand{\beq}{\begin{equation}}
\newcommand{\eeq}{\end{equation}}
\newcommand{\loc}{\text{loc}}
\begin{document}

\author{Rom\'eo Hatchi \thanks{\scriptsize CEREMADE, UMR CNRS 7534, Universit\'e Paris IX
Dauphine, Pl. de Lattre de Tassigny, 75775 Paris Cedex 16, FRANCE
\texttt{hatchi@ceremade.dauphine.fr}.}}

\title{Wardrop equilibria : long-term variant, degenerate anisotropic PDEs and numerical approximations}

\maketitle

  \abstract{As shown in \cite{Romeotheboss}, under some structural assumptions, working on congested traffic problems in general and increasingly dense networks leads, at the limit by $\Gamma$-convergence, to continuous minimization problems posed on measures on generalized curves. Here, we show the equivalence with another problem that is the variational formulation of an anisotropic, degenerate and elliptic PDE. For particular cases, we prove a Sobolev regularity result for the minimizers of the minimization problem despite the strong degeneracy and anisotropy of the Euler-Lagrange equation of the dual. We extend the analysis of \cite{brasco2013congested} to the general case. Finally, we use the method presented in \cite{benamou2013augmented} to make numerical simulations. }

\textbf{Keywords:} traffic congestion, Wardrop equilibrium, generalized curves, anisotropic and degenerate PDEs, augmented Lagrangian.

\section{Introduction} \label{intro}

Researchers in the field of modeling traffic have developed the concept of congestion in networks since the early 50's and the introduction of the notion of Wardrop equilibrium (see \cite{wardrop1952road}). Its important popularity is due to some applications to road traffic and communication networks. We will describe the general congested network model built in \cite{Romeotheboss} in the following subsection.

\subsection{Presentation of the general discrete model}

Given $d \in \NN, d \geq 2$ and $\Omega$ a bounded domain of $\RR^d$ with a Lipschitz boundary and $\ep > 0$, we take a sequence of finite oriented networks $\Oep = (N^{\ep}, E^{\ep})$ whose characteristic length is $\ep$, where $N^{\ep}$ is the set of nodes in $\Omega_{\ep}$ and $E^{\ep}$ the set of pairs $(x,e)$ with $x \in \Nep$ and $e \in \RR^d$ such that the segment $[x, x+e]$ is included in $\Omega$. We will simply identify arcs to pairs $(x,e)$. We assume $|E^{\ep}| = \max \{|e|, \text{ there exists }x \text{ such that }(x,e) \in E^{\ep} \} = \ep$. .

\textbf{Masses and congestion:} Let us denote the traffic flow on the arc $(x,e)$ by $m^{\ep}(x,e)$. There is a function $g\Ep : \Eep \times \RR_+ \rightarrow \RR_+$ such that for each $(x,e) \in \Eep$ and $m \geq 0$, $g\Ep(x,e,m)$ represents the traveling time of arc $(x,e)$ when the mass on $(x,e)$ is $m$. The function $g\Ep$ is positive and increasing in its last variable. This describes the congestion effect. We will denote the collection of all arc-masses $m\Ep(x,e)$ by $\mathbf{m\Ep}$.

\textbf{Marginals:} There is a distribution of sources $f_-\Ep= \sum_{x \in N\Ep} f_-\Ep(x) \delta_x$ and sinks $f_+\Ep= \sum_{x \in N\Ep} f_+\Ep(x) \delta_x$ which are discrete measures with same total mass on the set of nodes $\Nep$ (that we can assume to be 1 as a normalization)
\[
\sum_{x \in N\Ep} f_-\Ep(x) = \sum_{y \in N\Ep} f_+\Ep(y) = 1.
\]
The numbers $f_-\Ep(x)$ and $f_+\Ep(x)$ are nonnegative for every $x \in \Nep$.

\textbf{Paths and equilibria:} A path is a finite set of successive arcs $(x,e) \in \Eep$ on the network. $C\Ep$ is the finite set of loop-free paths on $\Oep$ and may be partitioned as
\[
C\Ep = \bigcup_{(x,y) \in N\Ep \times N\Ep} C_{x,y}\Ep = \bigcup_{x \in N\Ep} C_{x,\cdot}\Ep = \bigcup_{y \in N\Ep} C_{\cdot,y}\Ep,
\]
where $C\Ep_{x,\cdot}$ (respectively $C\Ep_{\cdot,y}$) is the set of loop-free paths starting at the origin $x$ (respectively stopping at the terminal point $y$) and $C\Ep_{x,y}$ is the intersection of $C\Ep_{x, \cdot}$ and $C\Ep_{\cdot, y}$. Then the travel time of a path $\gamma \in C\Ep$ is given by:
\[
\tau_{\mathbf{m^{\ep}}}^{\ep}(\gamma)= \sum_{(x,e) \subset \gamma} g^{\ep} (x,e,m\Ep(x,e)).
\]

The mass commuting on the path $\gamma \in C\Ep$ will be denoted $w\Ep(\gamma)$. The collection of all path-masses $w\Ep(\gamma)$ will be denoted $\mathbf{w\Ep}$. We may define an equilibrium that satisfies optimality requirements compatible with the distribution of sources and sinks and such that all paths used minimize the traveling time between their extremities, taking into account the congestion effects. In other words, we have to impose mass conservation conditions that relate arc-masses, path-masses and the data $f_-\Ep$ and $f_+\Ep$:
\begin{equation} \label{cons1}
f_-\Ep (x) := \sum_{\gamma \in C_{x, \cdot}\Ep} w\Ep(\gamma), \: f_+\Ep (y) := \sum_{\gamma \in C_{\cdot,y}\Ep} w\Ep(\gamma), \: \forall (x, y) \in \Nep \times \Nep
\end{equation}
and
\begin{equation} \label{cons2}
m\Ep(x, e) = \sum_{\gamma \in C\Ep: (x,e) \subset \gamma} w\Ep(\gamma), \forall (x,e) \in \Eep.
\end{equation}

We define $T\Ep_{g\Ep}$ to be the minimal length functional, that is:
\[
T\Ep_{g\Ep} (x,y) := \min_{\gamma \in C\Ep_{x,y}} \sum_{(x,e) \subset \gamma} g\Ep(x,e,m\Ep(x,e).
\]
Let $\Pi(f_-\Ep, f_+\Ep)$ be the set of discrete transport plans between $f_-\Ep$ and $f_+\Ep$, that is, the set of collection of nonnegative elements $(\vp\Ep(x,y))_{(x,y) \in {N\Ep}^2}$ such that
\[
\sum_{y \in N\Ep} \vp\Ep(x,y) = f_-\Ep(x) \text{ and } \sum_{x \in N\Ep} \vp\Ep(x,y) = f_+\Ep(x) \text{, for every } (x,y) \in N\Ep \times N\Ep.
\]
 This results in the concept of Wardrop equilibrium that is defined precisely as follows:

\begin{defi} \label{defW}
A Wardrop equilibrium is a configuration of nonnegative arc-masses $\mathbf{m}\Ep : (x, e) \rightarrow (m\Ep(x,e))$ and of nonnegative path-masses $\mathbf{w}\Ep : \gamma \rightarrow w\Ep(\gamma)$, that satisfy the mass conservation conditions \eqref{cons1} and \eqref{cons2} and such that:
\begin{enumerate}
\item
For every $(x, y) \in  \Nep \times \Nep$ and every $\gamma \in C_{x,y}\Ep $, if $w\Ep(\gamma) > 0$ then
\beq \label{eqW1}
\tau_{\mathbf{m^{\ep}}}^{\ep}(\gamma) = \min_{\gamma' \in C_{x,y}\Ep} \tau_{\mathbf{m^{\ep}}}^{\ep}(\gamma'),
\eeq
\item
if we define $\Pi\Ep(x,y) = \sum_{\gamma \in C\Ep_{x,y}} w\Ep(\gamma)$ then $\Pi\Ep$ is a minimizer of
\beq \label{eqW2}
\inf_{\vp\Ep \in \Pi(f_-\Ep, f_+\Ep)} \vp\Ep(x,y) T\Ep_{g\Ep} (x,y).
\eeq
\end{enumerate}
\end{defi}

Condition \eqref{eqW1} means that users behave rationally and always use shortest paths, taking in consideration congestion, that is, travel times increase with the flow. In \cite{baillon2012discrete, Romeotheboss}, the main discrete model studied is short-term, that is, the transport plan is prescribed. Here we work with a long-term variant as in \cite{brasco2013congested, brasco2010congested}. It means that we have fixed only the marginals (that are $f_-\Ep$ and $f_+\Ep$). So the transport plan now is an unknown and must be determined by some additional optimality condition that is \eqref{eqW2}. Condition \eqref{eqW2} requires that there is an optimal transport plan between the fixed marginals for the transport cost induced by the congested metric. So we also have an optimal transportation problem.

\subsection{Assumptions and preliminary results}

A few years after the work of Wardrop, Beckmann, McGuire and Winsten \cite{beckmann1956studies} observed that Wardrop equilibria coincide with the minimizers of a convex optimization problem:
\begin{theo} A flow configuration $(\mathbf{w}^{\ep}, \mathbf{m}^{\ep})$ is a Wardrop equilibrium if and only if it minimizes
\begin{equation} \label{P1}
\sum_{(x,e) \in E^{\ep}} G^{\ep} (x, e, m^{\ep}(x,e)) \text{ where } G^{\ep}(x, e, m) := \int_0^m g^{\ep}(x, e, \alpha) d \alpha
\end{equation}
subject to nonnegativity constraints and the mass conservation conditions \eqref{cons1}-\eqref{cons2}.
\end{theo}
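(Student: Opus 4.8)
The plan is to recognize \eqref{P1} as a convex program in the path-masses and to read off its first-order optimality conditions, which will turn out to coincide exactly with the two Wardrop conditions \eqref{eqW1}--\eqref{eqW2}. First I would take the path-masses $\mathbf{w}\Ep$ as the only free variables, since the arc-masses are determined from them by the linear relation \eqref{cons2}; substituting into \eqref{P1} produces the functional
\[
J(\mathbf{w}\Ep) := \sum_{(x,e) \in \Eep} G\Ep\Big(x,e,\sum_{\gamma :\, (x,e)\subset\gamma} w\Ep(\gamma)\Big).
\]
Because $g\Ep$ is increasing in its last argument, each $G\Ep(x,e,\cdot)$ is convex, so $J$ is convex (a sum of convex functions composed with linear maps). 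The nonnegativity constraints together with the mass-conservation equalities \eqref{cons1} cut out a convex polytope $K$, so the statement is an assertion about the minimizers of a convex function over a convex set.

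The key computation is the gradient. Since $\partial_m G\Ep = g\Ep$, differentiating $J$ in the direction of $w\Ep(\gamma)$ gives exactly $\sum_{(x,e)\subset\gamma} g\Ep(x,e,m\Ep(x,e)) = \tau_{\mathbf{m}^{\ep}}^{\ep}(\gamma)$, the congested travel time of $\gamma$. By convexity, $\mathbf{w}\Ep \in K$ minimizes $J$ if and only if the directional derivative is nonnegative in every feasible direction, i.e.\ if and only if $\mathbf{w}\Ep$ also minimizes the frozen linear functional $\mathbf{v} \mapsto \sum_{\gamma} \tau_{\mathbf{m}^{\ep}}^{\ep}(\gamma)\, v(\gamma)$ over $K$, with the costs evaluated at the current arc-masses. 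This linearization is the heart of the argument: it turns the nonlinear equilibrium condition into a transport-type linear program with frozen costs.

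I would then analyze this linear program by grouping paths according to their endpoints. Writing the induced plan $\Pi_v(x,y) = \sum_{\gamma \in C\Ep_{x,y}} v(\gamma)$, one has
\[
\sum_{\gamma} \tau_{\mathbf{m}^{\ep}}^{\ep}(\gamma)\, v(\gamma) \;\geq\; \sum_{(x,y)} T\Ep_{g\Ep}(x,y)\,\Pi_v(x,y),
\]
with equality precisely when $\mathbf{v}$ is supported on paths realizing the minimum defining $T\Ep_{g\Ep}(x,y)$. Since $\Pi_v$ ranges over all of $\Pi(f_-\Ep,f_+\Ep)$ as $\mathbf{v}$ ranges over $K$, minimizing the frozen functional decouples into two independent requirements: that every used path be a shortest path between its endpoints, which is \eqref{eqW1}, and that the induced plan $\Pi\Ep$ minimize $\sum_{(x,y)} T\Ep_{g\Ep}(x,y)\,\Pi(x,y)$, which is \eqref{eqW2}. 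Conversely, a configuration satisfying \eqref{eqW1}--\eqref{eqW2} saturates both requirements, hence minimizes the frozen functional, so by convexity it minimizes $J$.

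The step I expect to be the main obstacle is establishing this clean decoupling of the frozen program: in particular, verifying that the aggregate-flow structure of \eqref{cons1} really lets the path-routing part and the optimal-transport part separate, so that the single variational inequality splits into the two Wardrop conditions rather than a coupled one. The convexity supplied by the monotonicity of $g\Ep$ is what upgrades the first-order variational inequality to genuine global minimality, and it is worth noting that no strict convexity is needed for either implication.
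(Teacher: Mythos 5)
The paper offers no proof of this theorem at all: it is stated as the classical result of Beckmann, McGuire and Winsten, with the citation \cite{beckmann1956studies} standing in for the argument, so there is no in-paper proof to compare against. Your proposal is correct and is precisely the standard proof of that classical result — convexity of the path-flow functional $J$ from the monotonicity of $g\Ep$, the variational-inequality characterization of minimizers of a convex function over the polytope $K$ (minimality of $J$ iff minimality of the linearized, frozen-cost program), and the decoupling of that frozen linear program into the shortest-path condition \eqref{eqW1} plus the transport-plan optimality \eqref{eqW2}; the only points left implicit, namely that $\partial_m G\Ep = g\Ep$ requires continuity of $g\Ep$ in its last variable and that the surjectivity of $\mathbf{v}\mapsto\Pi_v$ onto $\Pi(f_-\Ep,f_+\Ep)$ uses nonemptiness of each path set $C\Ep_{x,y}$, are harmless under the paper's standing hypotheses.
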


The problem \eqref{P1} is interesting since it easily implies existence results and numerical schemes. However, it requires knowing the whole path flow configuration $\mathbf{w\Ep}$ so that it may quickly be untractable for dense networks. However a similar issue was recently studied in \cite{Romeotheboss}. Under structural assumptions, it is shown that we may pass to a continuous limit which will simplify the structure. Here, we will not see all these hypothesis, only the main ones. So we refer to \cite{Romeotheboss} for more details.

 \begin{hyp} \label{hy1}
 The discrete measures $(\ep^{\frac{d}{2} - 1} f_-\Ep)_{\ep > 0}$ and $(\ep^{\frac{d}{2} - 1} f_-\Ep)_{\ep > 0}$ weakly star converge to some probability measures $f_-$ and $f_+$ on $\Ob$ :
\[
\lim_{\ep \rightarrow 0^+} \ep^{d/2-1}\sum_{x \in N\Ep} ( \vp(x) f_-\Ep(x) + \psi(x) f_+\Ep(x)) = \IOb \vp df_- + \IOb \psi df_+, \: \forall (\vp, \psi) \in C(\Ob)^2.
\]
\end{hyp}

\begin{hyp} \label{hy5}
There exists $N \in \NN, \{v_k\}_{k=1,\dots,N} \in C^1(\RR^d,\Ss)^N$ and $\{c_k\}_{k=1,\dots,N} \in C^1(\Ob, \RR_+^*)^N$ such that $E^{\ep}$ weakly converges in the sense that
\[
\lim_{\ep \rightarrow 0^+} \sum_{(x,e) \in E^{\ep}} |e|^d \varphi \left (x, \frac{e}{|e|} \right ) = \int_{\OS} \varphi(x,v) \: \theta(dx,dv), \forall \varphi \in C(\ObS),
\]
where $\theta \in \mathcal{M}_+(\Omega \times \Ss)$ and $\theta$ is of the form
\[
\theta(dx,dv)= \sum_{k=1}^N c_k(x) \delta_{v_k(x)} dx.
\]
Moreover, there exists a constant $C >0$ such that for every $(x,z, \xi) \in \RR^d \times \Ss \times \RR_+^N$, there exists $\bar{Z} \in \RR_+^N$ such that $|\bar{Z}| \leq C$ and
\beq \label{min}
\bar{Z} \cdot \xi = \min \left \{Z \cdot \xi; Z = (z_1, \dots, z_N) \in \RR_+^N \text{ and } \sum_{k=1}^N z_k v_k(x) = z \right \}.
\eeq
\end{hyp}

The $c_k$'s are the volume coefficients and the $v_k$'s are the directions in the network.
The next assumption focuses on the congestion functions $g\Ep$.
 \begin{hyp} \label{hy2}
 $g\Ep$ is of the form
 \begin{equation} \label{H2}
 g^{\ep}(x,e,m)= |e|^{d/2} g \left (x, \frac{e}{|e|}, \frac{m}{|e|^{d/2}} \right), \: \forall \ep >0, (x,e) \in \Eep, m \geq 0
 \end{equation}
 where $g : \OS \times \mathbb{R}_+ \mapsto \mathbb{R}$ is a given continuous, nonnegative function that is increasing in its last variable.
 \end{hyp}

We then have
  \[
 G^{\ep}(x,e,m)= |e|^d G \left (x, \frac{e}{|e|}, \frac{m}{|e|^{d/2}} \right)  \text{ where } G(x, v, m) := \int_0^m g(x, v, \alpha) d \alpha.
 \]
% It is natural and means that the traveling time on an arc of length $|e|$ is of order $|e|$ and depends on the flow per unit of length i.e. $m/|e|$. For the general case, we have extended this assumption. The exponent $d/2$ is not very natural, it does not represent a physical phenomenon but it allows us to obtain the same relation between $G$ and $H$ i.e. $H(x,e,.)$ is the Legendre transform of  $G(x,e,.)$. We can also note that there is actually no $\ep$-dependence on the $g\Ep$.

 We also add assumptions on $G$:
 \begin{hyp} \label{hy3}
 There exists a closed neighborhood $U$ of $\Ob$ such that for $k=1, \dots, N$, $v_k$ may be extended on U in a function $C^1$ (still denoted $v_k$). Moreover, each function $(x,m) \in U \times \RR_+ \mapsto G(x,v_k(x),m)$ is Carath\'{e}odory, convex nondecreasing in its second argument with $G(x, v_k(x), 0) = 0$ a.e. $x \in U$ and there exists $1<q<d/(d-1)$ and two constants $0 < \lambda \leq \Lambda$ such that for every $(x,m) \in U \times \RR_+$ one has
 \begin{equation} \label{H3}
 \lambda (m^q -1) \leq G(x,v, m) \leq \Lambda (m^q +1).
 \end{equation}
 \end{hyp}

 The $q$-growth is natural since we want to work in $L^q$ in the continuous limit. The condition on $q$ has a technical reason. It means that the conjugate exponent $p$ of $q$ is $>d$, which allows us to use  Morrey's inequality in the proof of the convergence (\cite{Romeotheboss}). The extension on $U$ will serve to use regularization by convolution and Moser's flow argument. Examples of models that satisfy these assumptions are regular decompositions. In two-dimensional networks, there exists three different regular decompositions: cartesian, triangular and hexagonal. In these models, the length of an arc in $\Eep$ is $\ep$. The $c_k$'s and $v_k$'s are constant. In the cartesian case, $N=4$, $(v_1, v_2, v_3, v_4) := ((1,0),(0,1),(-1,0),(0,-1))$ and $c_k = 1$ for $k=1, \dots, 4$. For more details, see \cite{Romeotheboss}.

  Now, before presenting the continuous limit problem, let us set some notations.

  Let us write the set of generalized curves
\[
\mathcal{L} = \{ (\gamma, \rho) : \gamma \in W^{1,\infty}([0,1], \Ob), \rho \in \mathcal{P}_{\gamma} \cap L^1([0,1])^N\},
\]
  where
  \[
  \mathcal{P}_{\gamma} = \left \{ \rho : t \in [0,1] \rightarrow \rho(t) \in \RR_+^N \text{ and } \dot{\gamma}(t) = \sum_{k=1}^N v_k(\gamma(t)) \: \rho_k (t) \text{ a.e.} \right \}.
\]
We can notice that $\Pc_{\gamma}$ is never empty thanks to \ref{hy5}.
Let us denote $Q \in \mathcal{Q}(f_-, f_+)$ the set of Borel probability measures $Q$ on $\Lc$ such that the mass conservation constraints are satisfied
\[
\mathcal{Q}(f_-, f_+) := \{ Q \in \mathcal{M}^1_+(\Lc) : {e_0}_{\#} Q = f_-, {e_1}_{\#}Q = f_+\}
\]
where $e_t(\gamma, \rho) = \gamma(t), t \in [0,1], (\gamma, \rho) \in \Lc$. For $k=1,\dots,N$ let us then define the nonnegative measures on $\ObS$, $m_k^Q$ by
\beq \label{mQ}
\int_{\ObS} \varphi(x,v) dm_k^Q(x,v) = \int_{\mathcal{L}} \left ( \int_0^1 \varphi(\gamma(t),v_k(\gamma(t))) \rho_k(t) dt \right ) dQ(\gamma, \rho),
\eeq
for every $\varphi \in C(\ObS, \mathbb{R}).$ Then write simply $m^Q = \sum_{k=1}^N m_k^Q$, nonnegative measure on $\ObS$.
Finally assume that
\[
\mathcal{Q}^q(f_-, f_+) := \{ Q \in \mathcal{Q}(f_-, f_+): m^Q \in L^q(\theta) \} \neq \emptyset.
\]

It is true when for instance, $f_+$ and $f_-$ are in $L^q(\Omega)$ and $\Omega$ is convex. Indeed, first for $Q \in \Mc_+^1(W^{1,\infty}([0,1], \Ob))$, let us define $i_Q \in \Mc_+(\Ob)$ as follows
\[
\IO \vp \: di_Q = \int_{W^{1,\infty}([0,1], \Ob)} \left ( \int_0^1 \vp(\gamma(t)) |\dot{\gamma}(t)| dt \right ) dQ(\gamma) \text{ for } \vp \in C(\Ob, \RR).
\]
It follows from the regularity results of \cite{de2004integral,santambrogio2009absolute} that there exists $Q \in \Mc_+^1(W^{1,\infty}([0,1], \Ob))$ such that ${e_0}_{\#} Q = f_-$, ${e_1}_{\#}Q = f_+$ and $i_Q \in L^q$. For each curve $\gamma$, let $\rho^{\gamma} \in \Pc_{\gamma}$ such that $\sum_k \rho_k^{\gamma}(t) \leq C |\dot{\gamma}(t)|$ (we have the existence due to \ref{hy5}). Then we set $\tilde{Q} = {(id, \rho^{\cdot})}_{\#}Q$. We have $\tilde{Q} \in \mathcal{Q}^q(f_-, f_+)$ so that we have proved the existence of such kind of measures.

Then Wardrop equilibria at scale $\ep$ converge as $\ep \rightarrow 0^+$ to solutions of the following problem
\begin{equation} \label{pdb}
\inf_{Q \in \mathcal{Q}^q(f_-, f_+)} \IOS G (x, v, m^Q(x, v)) \theta(dx, dv)
\end{equation}
(see \cite{Romeotheboss}). Nevertheless this problem \eqref{pdb} is posed over probability measures on generalized curves and it is not obvious at all that it is simpler to solve than the discrete problem \eqref{P1}. So in the present paper, we want to show that problem \eqref{pdb} is equivalent to another problem that will roughly amount to solve an elliptic PDE. This problem is
\begin{equation} \label{pb2}
\inf_{\sigma \in L^q(\Omega, \mathbb{R}^d)}
\inf_{\varrho \in \mathcal{P}^{\sigma}} \left \{ \int_{\OS} G(x,v, \varrho(x,v)) \: \theta (dx, dv); \: -\text{div } \sigma = f \right \},
\end{equation}
where
\begin{equation*}
\mathcal{P}^{\sigma} = \left \{ \varrho : \Omega \times \Ss \rightarrow \RR_+; \: \forall x \in \Omega, \: \sigma(x) = \sum_{k=1}^N v_k(x) \varrho(x, v_k(x)) \right \},
\end{equation*}
$f=f_+ - f_-$ and the equation $-\text{div}(\sigma) = f$ is defined by duality:
\[
\IO \nabla u \cdot \sigma = \IO u \: df, \text{ for all } u \in C^1(\Ob),
\]
so the homogeneous Neumann boundary condition $\sigma \cdot \nu_{\Omega} = 0$ is satisfied on $\partial \Omega$ in the weak sense. For the sake of clarity, let us define
\[
\mathcal{G}(x, \sigma) := \inf_{\vr \in \mathcal{P}_x^{\sigma}} \sum_{k=1}^N c_k(x) G(x,v_k(x), \vr_k)  := \inf_{\vr \in \mathcal{P}_x^{\sigma}} \overline{G}(x, \vr)
\]
where
\[
\mathcal{P}_x^{\sigma} = \left \{ \vr \in \RR_+^N; \: \sigma = \sum_{k=1}^N v_k(x) \vr_k \right \} \text{ and }  \overline{G}(x, \vr) := \sum_{k=1}^N c_k(x) G(x,v_k(x), \vr_k),
 \]
 for $x \in \Omega, \sigma \in \RR^d$. We recall that the $c_k$'s are the volume coefficients in $\theta$. $\Gc$ is convex in the second variable (since $G$ is convex in its last variable).The minimization problem \eqref{pb2} can then be rewritten as
\begin{equation}
\label{PP}
\inf_{\sigma \in L^q(\Omega, \mathbb{R}^d)}
 \left \{ \int_{\Omega} \mathcal{G}(x,\sigma(x)) \: dx; \: -\text{div } \sigma = f \right \}.
\end{equation}

This problem \eqref{PP} looks like the ones introduced by Beckmann \cite{beckmann1952continuous} for the design of an efficient commodity transport program. The dual problem of \eqref{PP} takes the form
\begin{equation}
\label{PD}
\sup_{u \in W^{1,p}(\Omega)} \left \{ \int_{\Omega} u \: df - \int_{\Omega} \mathcal{G}^*(x, \nabla u(x)) \: dx \right \},
\end{equation}
where $p$ is the conjugate exponent of $q$ and $\Gc^*$ is the Legendre transform of $\Gc(x, \cdot)$. In order to solve \eqref{PP}, we can first solve the Euler-Lagrange equation of its dual formulation and then use the primal-dual optimality conditions. Nevertheless, in our typical congestion models, the functions $G(x,v,\cdot)$ have a positive derivative at zero (that is $g(x,v,0)$). Indeed, going at infinite speed - or teleportation - is not possible even when there is no congestion. So we have a singularity in the integrand in \eqref{PP}. Then $G^*$ and the Euler-Lagrange equation of \eqref{PD} are extremely degenerate. Moreover, the prototypical equation of \cite{brasco2010congested} is the following
\[
-\text{div} \left ( (|\nabla u| - 1)^{p-1}_+ \frac{\nabla u}{|\nabla u|} \right ) = f.
\]
Here, for well chosen $g$, we obtain anisotropic equation of the form
\[
- \sum_{k=1}^N \sum_{l=1}^d \partial_l \left [ b_k(x) v_{kl}(x) (\nabla u \cdot v_k(x) - \delta_k c_k(x))_+^{p-1} \right ]  = f.
\]
where $v_k(x) = (v_{k1}(x), \dots, v_{kd}(x))$ for $k=1, \dots, N$ and $x \in \Ob$. In the cartesian case, we can separate the variables in the sum but in the hexagonal one ($d=2$), it is  impossible. The previous equation degenerates in an unbounded set of values of the gradient and its study is delicate, even if all the $\delta_k$'s are zero. It is more complicated than the one in \cite{brasco2013congested}. Indeed, the studied model in \cite{brasco2013congested} is the cartesian one and the prototypical equation is
\[
- \sum_{k=1}^2 \partial_k \left ( (|\partial_k u| - \delta_k)_+^{p-1} \frac{\partial_k u}{|\partial_k u|} \right ) = f.
\]
The plan of the paper is as follows. In Section 2, we formulate some relationship between \eqref{pdb} and \eqref{PP}. Section 3 is devoted to optimality conditions for \eqref{PP} in terms of solutions of \eqref{PD}. We also present the kind of PDEs that represent realistic anisotropic models of congestion. In Section 4, we give some regularity results in the particular case where the $c_k$'s and the $v_k$'s are constant. Finally, in Section 5, we describe numerical schemes that allow us to approximate the solutions of the PDEs.

\section{Equivalence with Beckmann problem}

Let us study the relationship between problems \eqref{pdb} and \eqref{pb2}. We still assume that all specified hypothesis in \ref{intro} are satisfied. Let us notice that thanks to \ref{hy5}, for every $\sigma \in L^q(\Omega, \mathbb{R}^d)$, there exists $\hat{\varrho} \in \mathcal{P}^{\sigma}$ such that $\hat{\vr} \in L^q(\theta)$ and $\hat{\vr}$ minimizes the following problem :
\[
\inf_{\varrho \in \mathcal{P}^{\sigma}} \left \{ \int_{\OS} G(x,v, \varrho(x,v)) \: \theta (dx, dv) \right \}.
\]
For $\varrho \in \Pc^{\sigma}$, define $\bar{\varrho} : \Omega \rightarrow \RR_+^N$ where $\bar{\varrho}_k(x) = \varrho(x, v_k(x))$, for every $x \in \Omega, k=1, \dots, N.$ Now, we only consider $\bar{\varrho}$ that we simply write $\varrho$ (by abuse of notations).

\begin{theo}
Under all previous assumptions, we have
\begin{equation*}
\inf \eqref{pdb} = \inf \eqref{PP}.
\end{equation*}
\end{theo}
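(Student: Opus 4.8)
The plan is to establish the two inequalities $\inf\eqref{PP}\le\inf\eqref{pdb}$ and $\inf\eqref{pdb}\le\inf\eqref{PP}$ separately, by setting up an explicit correspondence between probability measures $Q$ on generalized curves and vector fields $\sigma$ satisfying the Beckmann constraint $-\mathrm{div}\,\sigma=f$. I note first that \eqref{pb2} and \eqref{PP} are literally the same problem: for a.e.\ fixed $x$ the inner infimum over $\varrho\in\Pc^\sigma$ in \eqref{pb2} is exactly the pointwise infimum defining $\Gc(x,\sigma(x))$, so it suffices to compare \eqref{pdb} with \eqref{pb2}. Throughout I will use that, by the discussion preceding the theorem, for each admissible $\sigma$ the inner problem admits a minimizer $\hat\varrho\in\Pc^\sigma\cap L^q(\theta)$.

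For $\inf\eqref{PP}\le\inf\eqref{pdb}$ I would argue constructively. Given $Q\in\Qc^q(f_-,f_+)$, the measures $m_k^Q$ of \eqref{mQ} are concentrated on the graphs $\{(x,v_k(x))\}$, and reading off their $\Omega$-densities produces a vector field $\sigma^Q:=\sum_{k=1}^N v_k\,m_k^Q$, which lies in $L^q$ since $m^Q\in L^q(\theta)$. The key point is that $\sigma^Q$ satisfies $-\mathrm{div}\,\sigma^Q=f$: testing against $u\in C^1(\Ob)$ and using the chain rule $\tfrac{d}{dt}u(\gamma(t))=\nabla u(\gamma(t))\cdot\dot\gamma(t)$ together with the defining relation $\dot\gamma=\sum_k v_k(\gamma)\rho_k$ of $\Pc_\gamma$, the time integral telescopes, and the marginals ${e_0}_\#Q=f_-$, ${e_1}_\#Q=f_+$ give $\IO\nabla u\cdot\sigma^Q=\IO u\,df$. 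The cost comparison is then immediate from the definition of $\Gc$ as a pointwise infimum: the density read off from $m^Q$ along the directions $v_k$ furnishes an admissible competitor in the infimum defining $\Gc(x,\sigma^Q(x))$, so that $\IO\Gc(x,\sigma^Q(x))\,dx\le\IOS G(x,v,m^Q(x,v))\,\theta(dx,dv)$. Taking the infimum over $Q$ yields the claim.

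The reverse inequality $\inf\eqref{pdb}\le\inf\eqref{PP}$ is the substantial one and amounts to a superposition result: lifting a flux back to a measure on curves. I would first invoke the direct method to obtain a minimizer $\sigma^*$ of \eqref{PP} — coercivity comes from the lower bound $\lambda(m^q-1)\le G$ in \ref{hy3}, together with convexity of $\Gc(x,\cdot)$ and weak closedness of the constraint $\{-\mathrm{div}\,\sigma=f\}$ — along with its optimal $\hat\varrho\in\Pc^{\sigma^*}$. The goal is to construct $Q\in\Qc^q(f_-,f_+)$ whose directional components reconstruct $\hat\varrho$, so that the two energies coincide rather than merely bound each other. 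To do this I would regularize $\sigma^*$ by convolution (this is where the $C^1$ extension on the neighborhood $U$ of \ref{hy3} is used), solve the associated continuity equation by the Dacorogna--Moser (Moser flow) argument to produce a family of curves interpolating $f_-$ and $f_+$, and use \ref{hy5} — the selection $\bar Z$ with $|\bar Z|\le C$ — to split the velocity pointwise as $\sum_k v_k\rho_k$ with $\sum_k\rho_k\le C|\dot\gamma|$; this guarantees that the curves lie in $\Lc$ and, via the $L^q$ regularity estimates of \cite{de2004integral,santambrogio2009absolute}, that $m^Q\in L^q(\theta)$. Passing to the limit as the regularization parameter vanishes produces the desired $Q$.

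The hard part will be precisely this last construction: producing a curve measure whose marginals are exactly $f_-,f_+$, whose directional components recover $\hat\varrho$ (so that the cost is preserved through the limit), and which keeps $m^Q$ in $L^q(\theta)$. Controlling the $L^q$ norm under regularization while simultaneously carrying out the direction-splitting is the delicate step, and it is exactly here that Assumptions \ref{hy5} and \ref{hy3} and the cited absolute-continuity results are needed. Once both inequalities are in hand they combine to give $\inf\eqref{pdb}=\inf\eqref{PP}$.
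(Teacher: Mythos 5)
Your Step 1 is essentially the paper's own argument: you define $\sigma^Q=\sum_k v_k m_k^Q$ and check $-\text{div}\,\sigma^Q=f$ via the chain rule, while the paper defines $\sigma^Q$ by integrating $\varphi(\gamma(t))\cdot\dot\gamma(t)$ against $Q$ and then identifies it with $\sum_k v_k m_k^Q$ by testing with $\xi(x,v)=\varphi(x)\cdot v$; these are the same computation, and the cost comparison via the pointwise infimum defining $\Gc$ is identical. The genuine gap is in Step 2. You state the correct goal (the directional components of $Q$ must reconstruct the optimal decomposition $\hat\vr$), but the mechanism you propose cannot achieve it: if you split the Moser-flow velocity using the bounded selection $\bar{Z}$ of \ref{hy5}, the generalized curves carry the decomposition $\bar{Z}(x,\sigma^\ep(x))$ of $\sigma^\ep$, so the energy of the resulting $Q^\ep$ is $\IO \overline{G}\bigl(x,\bar{Z}(x,\sigma^\ep(x))\bigr)\,dx$, which, since $\Gc(x,\cdot)$ is by definition the \emph{infimum} of $\overline{G}(x,\cdot)$ over decompositions, is $\geq \IO \Gc(x,\sigma^\ep(x))\,dx$ --- an inequality in the wrong direction, with no smallness of the gap. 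The selection of \ref{hy5} minimizes a fixed linear functional $Z\cdot\xi$, which is unrelated to the congestion cost $\overline{G}$; already in the hexagonal model with unequal $\delta_k$ the two minimizers differ, so your construction only bounds $\inf\eqref{pdb}$ by something that can exceed $\inf\eqref{PP}$ strictly. You have in effect transplanted the argument that the paper uses in the introduction to prove $\Qc^q(f_-,f_+)\neq\emptyset$ (bounded selection plus the regularity results of \cite{de2004integral,santambrogio2009absolute}) into a place where it cannot do the job; in particular, the $L^q$ bound on $m^{Q^\ep}$ in this step does not come from those references either.

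The repair --- and this is what the paper actually does --- is to regularize the decomposition together with the flux: take a $\delta$-minimizer $\sigma$ with $\vr\in\Pc^{\sigma}\cap L^q$, set $\sigma^\ep=\eta^\ep\star\sigma$ and $\vr_k^\ep=\eta^\ep\star\vr_k$. Because the constraint $\sum_k v_k(x)\vr_k(x)=\sigma(x)$ has $x$-dependent directions, convolution destroys it; the defect $I^\ep=\sigma^\ep-\sum_k v_k\vr_k^\ep$ tends to $0$ in $L^q$ (using $v_k\in C^1(U)$), and \ref{hy5} is used precisely here, to write $I^\ep=\sum_k p_k^\ep v_k$ with $p_k^\ep\geq 0$ and $p^\ep$ controlled by $I^\ep$, so that $P^\ep:=\vr^\ep+p^\ep\in\Pc^{\sigma^\ep}$ and $P^\ep\to\vr$ in $L^q$. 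Feeding $P^\ep/g^\ep$ (not $\bar{Z}$) to the curves of the Moser flow yields $m_k^{Q^\ep}=P_k^\ep\,dx$, whence the uniform $L^q$ bound, tightness after constant-speed reparameterization, and in the limit $m^Q(\cdot,v_k(\cdot))\leq\vr_k$ by semicontinuity; monotonicity of $G(x,v,\cdot)$ then gives $\int_{\OS} G(x,v,m^Q)\,\theta(dx,dv)\leq\int_{\OS} G(x,v,\vr)\,\theta(dx,dv)\leq\inf\eqref{PP}+\delta$. Two minor further points: working with $\delta$-minimizers spares you the direct-method existence argument for $\sigma^*$, and the conclusion is necessarily an inequality plus semicontinuity rather than the exact energy identity you aim for, since the limit $Q$ need not reproduce $\vr$ exactly.
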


\begin{proof} We adapt the proof in \cite{brasco2013congested}. We will show the two inequalities. \\
\textbf{Step 1: } $ \inf \eqref{pdb} \geq \inf \eqref{PP}$. \\
Let $Q \in \mathcal{Q}^q(f^+,f^-)$. We build $\sigma^Q \in L^q(\Omega, \mathbb{R}^d)$ that will allow us to obtain the desired inequality, we define it as follows :
\beq \label{sigmaQ}
\IO \varphi \: d\sigma^Q = \int_{\mathcal{L}} \int_0^1 \vp (\gamma(t)) \cdot \dot{\gamma}(t) dt \: dQ(\gamma,\rho), \forall \varphi \in C(\Ob, \RR^d).
\eeq
In particular, we have that $-\text{div }\sigma^Q = f$ since $Q \in \Qc(f_-, f_+)$.
%\[
%\int_{\Omega} \nabla u \: d \sigma^Q  = \int_{\mathcal{L}} [ u(\gamma(1)) - u(\gamma(0)) ] \: dQ(\gamma, \rho)  = \int_{\Omega} u \: df.
%\]
We now justify that
\[
\sigma^Q(x) = \int_{\Ss} v \, m^Q(x,v) \: dv = \sum_{k=1}^N v_k(x) m^Q(x,v_k(x)) \: \text{ a.e. } x \in \Ob.
\]

Recall that for every $\xi \in C(\ObS, \mathbb{R}),$
\[
\int_{\ObS} \xi dm^Q = \int_{\mathcal{L}}  \int_0^1 \left ( \sum_{k=1}^N \xi(\gamma(t),v_k(\gamma(t))) \rho_k(t) \right ) dt \: dQ(\gamma, \rho).
\]

By taking $\xi$ of the form $\xi(x,v)= \varphi(x) \cdot v$ with $\varphi \in C(\overline{\Omega}, \mathbb{R}^d)$, we get
\begin{align*}
\int_{\ObS} \varphi(x) \cdot v \: dm^Q(x,v) & = \int_{\mathcal{L}}  \int_0^1  \left ( \sum_{k=1}^N \rho_k(t) \vp(\gamma(t)) \cdot v_k(\gamma(t)) \right ) dt \: dQ(\gamma, \rho) \\
& = \int_{\Omega} \varphi \: d \sigma^Q.
\end{align*}
Moreover, since $m^Q \geq 0$, we obtain that $m^Q \in \mathcal{P}^{\sigma^Q}$ (and so that $\sigma^Q \in L^q$) and the desired inequality follows. \\

\textbf{Step 2: } $ \inf \eqref{pdb} \leq \inf \eqref{PP}$. \\
Now prove the other inequality. We will use Moser's flow method (see \cite{brasco2010congested, dacorogna1990partial, moser1965volume}) and a classical regularization argument. Fix $\delta >0$. Let $\sigma \in L^q(\Omega, \mathbb{R}^d)$ and $\vr \in \mathcal{P}^{\sigma} \cap L^q(\Omega, \RR^N)$ such that
\[
\int_{\OS} G(x,v, \varrho(x,v)) \: \theta (dx, dv) \leq \text{inf } \eqref{PP} + \delta
\]
with $-\text{div } \sigma = f$. We extend them outside $\Omega$ by $0$. Let then $\eta \in C_c^{\infty} (\mathbb{R}^d)$ be a positive function, supported in the unit ball $B_1$ and such that $\int_{\mathbb{R}^d} \eta = 1$.  For $\ep  \ll 1$ so that $\Oep := \Omega + \ep B_1 \Subset U$, we define $\eta\Ep(x) := \ep^{-d} \eta (\ep^{-1} x)$, $\sigma^{\ep} := \eta\Ep \star \sigma$ and $\vr_k\Ep(x) := \eta\Ep \star \vr_k(x)$ for $k=1, \dots, N$. By construction, we thus have that $\sigma^{\ep} \in C^{\infty} (\overline{\Omega}_{\ep})$ and
\[
-\text{ div } (\sigma^{\ep}) = f_+^{\ep} - f_-^{\ep} \: \text{ in } \Oep \: \text{ and } \: \sigma^{\ep} = 0 \: \text{ on } \: \partial \Oep,
\]
where $f_{\pm}^{\ep} = \eta\Ep \star (f_{\pm} 1_{\overline{\Omega}} ) + \ep$. But the problem is that we do not have $\vr\Ep \in \mathcal{P}^{\sigma\Ep}$. We shall build a sequence $(P\Ep)$ in $\mathcal{P}^{\sigma\Ep}$ that converges to $\rho$ in $L^q(U, \RR^N)$. Notice that
\begin{align*}
\sigma\Ep(x) & = \sum_{k=1}^N \int \eta\Ep(y) \vr_k(x-y) v_k(x-y) \: dy \\
& = \sum_{k=1}^N \vr\Ep_k(x) v_k(x) + \sum_{k=1}^N \int \eta\Ep(y) \vr_k(x-y) (v_k(x-y)-v_k(x)) \: dy
\end{align*}
There exists $p\Ep_k \in L^q(\Oep)$ such that for every $k=1, \dots, N,$ $p\Ep_k \geq 0, p\Ep_k \rightarrow 0$ and for $x \in \Oep$, we have
\[
I\Ep(x) = \sum_{k=1}^N \int \eta\Ep(y) \vr_k(x-y) (v_k(x-y)-v_k(x)) \: dy = \sum_{k=1}^N p\Ep_k(x) v_k(x).
\]
 Such a family exists since $I\Ep \in L^q$ and $I\Ep \rightarrow 0$ (by using the fact that the $v_k$'s are in $C^1(U)$) and we can estimate $p\Ep_k$ with $I\Ep$ due to \ref{hy5}. Then if we set $P\Ep = \vr\Ep + p\Ep$, we have $P\Ep \in \mathcal{P}^{\sigma\Ep}$ and $P\Ep \rightarrow \vr$ in $L^q$.

Define $g^{\ep}(t,x) := (1-t) f_-^{\ep}(x) + t f_+^{\ep} (x)  \, \forall t \in [0,1], \: x \in \Ob_{\ep}$, let then $X^{\ep}$ be the flow of the vector field $v^{\ep} := \sigma^{\ep} / g^{\ep}$, that is,
\[
\left \{ \begin{aligned}
& \dot{X}_t^{\ep} (x) = v^{\ep} (t, X_t^{\ep}(x)) \\
& X_0^{\ep} (x) =x, \: \: (t,x) \in [0,1] \times \Ob_{\ep}.
\end{aligned} \right.
\]
We have $\partial_t g\Ep + \text{div } (g\Ep v\Ep) = 0$. Since $v\Ep$ is smooth and the initial data is $g\Ep (0, \cdot) = f_-\Ep$, we have ${X_t\Ep}_{\#}f_-\Ep = g\Ep(t, \cdot)$. Let us define the set of generalized curves
\[
 \Lc_{\ep} = \{ (\gamma, \rho) : \gamma \in W^{1,\infty}([0,1], \Oep), \rho \in \mathcal{P}_{\gamma} \cap L^1([0,1])^N\}.
 \]
Let us consider the following measure $Q^{\ep}$ on $\Lc_{\ep}$
\[
Q^{\ep} = \int_{\Ob_{\ep}} \delta_{(X_{\cdot}^{\ep}(x), P\Ep( X_{\cdot}^{\ep}(x)) / g^{\ep}(\cdot, X_{\cdot}^{\ep}(x) )) } df_-^{\ep}(x).
\]
We then have  ${e_t}_{\#}Q^{\ep} = {X_t\Ep}_{\#}f_-\Ep = g\Ep(t, \cdot)$ for $t \in [0,1]$. We define $\sigma^{Q\Ep}$ and $m_k^{Q\Ep}$ as in \eqref{sigmaQ} and \eqref{mQ} respectively, by using test-functions defined on $\Oep$.
%Still thanks to the fact that ${X_t\Ep}_{\#}f_-\Ep = g\Ep(t, \cdot)$, we have $m_k^{Q^{\ep}} = P_k\Ep \in \mathcal{P}^{\sigma^{\ep}}$.
We then have $\sigma^{Q^{\ep}}= \sigma^{\ep}$. Indeed, for $\varphi \in C(\Ob_{\ep}, \mathbb{R}^d)$, we have
\begin{align*}
\int_{\Ob_{\ep}} \varphi \: d \sigma^{Q^{\ep}}
&  = \int_{\Ob_{\ep}} \int_0^1 \varphi (X_t^{\ep}(x)) \cdot v^{\ep}(t, X_t^{\ep}(x)) f_-^{\ep}(x) \: dt \: dx \\
&  = \int_0^1 \int_{\Ob_{\ep}}  \varphi (x) \cdot v^{\ep}(t, x) g^{\ep}(t,x) \: dx \: dt \\
& = \int_{\Ob_{\ep}} \varphi \: d \sigma^{\ep}
\end{align*}
which gives the equality. We used the definition of $Q\Ep$, the fact that ${X_t\Ep}_{\#}f_-\Ep = g\Ep(t, \cdot)$ and that $v\Ep g\Ep = \sigma\Ep$ and Fubini's theorem.
In the same way, we have $m^{Q^{\ep}} \in \mathcal{P}^{\sigma^{\ep}}$. To prove it, we take the same arguments as in the end of Step 1 and in the previous calculation. For $\vp \in C(\Oep, \RR^d)$, we have
\begin{align*}
\int_{\Oep \times \Ss} \vp(x) & \cdot v \: m^{Q\Ep}(dx, dv) \\
& = \int_0^1 \left ( \int_{\Oep} \sum_{k=1}^N \vp(X_t^{\ep}(x)) \cdot v_k(X_t^{\ep}(x)) \frac{P_k\Ep(X_t^{\ep}(x))}{g\Ep(t,X_t^{\ep}(x))} f_-\Ep(x) dx \right ) dt \\
& = \int_0^1 \left ( \int_{\Oep} \vp(X_t^{\ep}(x)) \cdot \frac{\sigma\Ep(X_t^{\ep}(x))}{g\Ep(t,X_t^{\ep}(x))} f_-\Ep(x) dx \right ) dt \\
& = \int_0^1 \left ( \int_{\Oep} \vp(x) \cdot \sigma\Ep(x) dx \right ) dt \\
& = \int_{\Oep} \vp \: d\sigma\Ep.
\end{align*}
Moreover, more precisely, we have $m_k^{Q\Ep} (dx,dv) = \delta_{v_k(x)} P\Ep_k(x) dx$. Then we conclude as in \cite{brasco2013congested}. First for any Lipschitz curve $\vp$, let us denote by $\tilde{\vp}$ its constant speed reparameterization, that is, for $t \in [0,1], \tilde{\vp}(t) = \vp( s^{-1}(t)),$ where
\[
s(t) = \frac{1}{l(\vp)}\int_0^t |\dot{\vp}(u)| du \text{ with } l(\vp) = \int_0^1 |\dot{\vp}(u)| du.
\]
For $(\vp, \rho) \in \Lc$, let $\tilde{\rho}$ be the reparameterization of $\rho$ i.e.
\[
\tilde{\rho}_k(t) := \frac{l(\sigma)}{| \dot{\sigma}(s^{-1}(t))|} \rho_k(s^{-1}(t)), \forall t \in [0,1], k=1, \ldots, N.
\]
Let us denote by $\tilde{Q}$ the push forward of $Q$ through the map $(\vp, \rho) \mapsto (\tilde{\vp}, \tilde{\rho})$. We have $m_k^{\tilde{Q}}=m_k^Q$ and $\sigma^{\tilde{Q}}=\sigma^Q$. Then arguing as in \cite{Romeotheboss}, the $L^q$ bound on $m^{Q\Ep}$ yields the tightness of the family of Borel measures $\tilde{Q}\Ep$ on $C([0,1], \RR^d) \times L^1([0,1])^N$. So $Q^{\ep}$ $\star$-weakly converges to some measure $Q$ (up to a subsequence). Let us remark that $\tilde{Q}\Ep$ has its total mass equal to that of $f_+\Ep$, that is, $1+\ep |\Oep|$. Thus one can show that $Q(\Lc) = 1$) (due to the fact that $Q(\Lc)=\lim_{\ep \rightarrow 0^+} Q(\Lc_{\ep}) = 1$). Moreover, we have $Q \in \Qc(f_-, f_+)$ thanks to the $\star$-weak convergence of $\tilde{Q}\Ep$ to $Q$. Recalling the fact that $P\Ep_k = m^{Q^{\ep}}(\cdot,v_k(\cdot)) $ strongly converges in $L^q$ to $\vr_k$ ($\vr \in \Pc^{\sigma}$) and due to the same semicontinuity argument as in \cite{carlier2008optimal, Romeotheboss}, we have $m^Q(\cdot, v_k(\cdot)) \leq \vr_k$ in the sense of measures. Then $m^Q(\cdot, v_k(\cdot)) \in L^q$ so that $Q \in \Qc^q(f_-, f_+)$. It follows from the monotonicity of $G(x,v,\cdot)$ that :
\begin{align*}
\int_{\OS} G(x,v, m^Q(x,v)) \: \theta (dx, dv) & \leq \int_{\OS} G(x,v, \vr(x,v)) \: \theta (dx, dv) \\
& \leq \inf \eqref{PP} + \delta.
\end{align*}
Letting $\delta \rightarrow 0^+$, we have the desired result.
\end{proof}

In fact, we showed in the previous proof a stronger result. We proved the following equivalence
\[
Q \text{ solves } \eqref{pdb} \Longleftrightarrow \sigma^Q \text{ solves } \eqref{pb2}
\]
and moreover,
\[
(m^Q(\cdot, v_k(\cdot)))_{k=1, \dots, N} \in \mathcal{P}^{\sigma^Q}
\]
is optimal for \eqref{pdb}. We also built a minimizing sequence for \eqref{pdb} from a regularization of a solution $\sigma$ of \eqref{pb2} by using Moser's flow argument.

\section{ Characterization of minimizers via anisotropic elliptic PDEs}

Here, we study the primal problem \eqref{PP} and its dual problem \eqref{PD}. Recalling that $f=f_+ - f_-$ has zero mean, we can reduce the problem \eqref{PD} only to zero-mean $W^{1,p}(\Omega)$ functions. Since for $(x, v) \in \ObS$ and $k=1,\dots,N$, $G(x,v, \cdot)$ has a positive derivative at zero, $G$ is strictly convex in its last variable then so is $\Gc(x, \cdot)$ for $x \in \Ob$. Thus $\Gc^*$ is $C^1$. However $\Gc$ is not differentiable so that $\Gc^*(x, \cdot)$ is degenerate.
By standard convex duality (Fenchel-Rockafellar's theorem, see \cite{ekeland1976convex} for instance), we have that $\min \eqref{PP} = \max \eqref{PD}$ and we can characterize the optimal solution $\sigma$ of \eqref{PP} (unique, by strict convexity) as follows
\[
\sigma(x)= \nabla \mathcal{G}^*(x, \nabla u(x)),
\]
where $u$ is a solution of \eqref{PD}.
In other terms, $u$ is a weak solution of the Euler-Lagrange equation
\[
\left \{
\begin{aligned}
 -\text{ div } (\nabla  \mathcal{G}^*(x, \nabla u(x))) & = f \: & \text{ in } \Omega, \\
   \nabla  \mathcal{G}^*(x, \nabla u(x)) \cdot \nu_{\Omega} & = 0 \: & \text{ on } \partial \Omega,
\end{aligned} \right.
\]
in the sense that
\[
\int_{\Omega} \nabla  \mathcal{G}^*(x, \nabla u(x)) \cdot \nabla \varphi (x) \: dx = \int_{\Omega} \varphi(x) \: df(x) , \: \forall \varphi \in W^{1,p}(\Omega).
\]
Let us remark that if $u$ is not unique, $\sigma$ is.

A typical example is $g(x, v_k(x), m) = g_k(x, m) = a_k(x) m^{q-1} + \delta_k$ with $\delta_k > 0$ and the weights $a_k$ are regular and positive.  We can explicitly compute $\mathcal{G}^*(x, z)$. Let us notice that for every $x \in \Omega, z \in \RR^d$, we have :
\begin{align*}
\mathcal{G}^*(x, z) & =  \sup_{\sigma \in \RR^d} (z \cdot \sigma - \mathcal{G}(x, \sigma) ) =  \sup_{\sigma \in \RR^d} (z \cdot \sigma - \inf_{\vr \in \mathcal{P}_x^{\sigma}} \overline{G}(x, \vr) ) \\
 & = \sup_{\sigma, \vr} (z \cdot \sigma - \overline{G} (x, \vr)) = \sup_{\vr \in \RR_+^N } \left \{ \sum_{k=1}^N ( z \cdot v_k(x)) \vr_k - \overline{G} (x, \vr) \right \}.
 \end{align*}
A direct calculus then gives
 \[
 \mathcal{G}^*(x, z) = \sum_{k=1}^N \frac{b_k(x)}{p} (z \cdot v_k(x) - \delta_k c_k(x) )_+^p,
  \]
where $b_k = (a_k c_k)^{-\frac{1}{q-1}}$. The PDE then becomes
\begin{equation} \label{3.4}
- \sum_{k=1}^N \sum_{l=1}^d \partial_l \left [ b_k(x) v_{kl}(x) (\nabla u \cdot v_k(x) - \delta_k c_k(x) )_+^{p-1} \right ]  = f,
\end{equation}
where $v_k(x) = (v_{k1}(x), \dots, v_{kd}(x))$.

For $k=1, \dots, N$,  $\mathcal{G}_k^*(x, z) = \frac{b_k(x)}{p} (z \cdot v_k(x) - \delta_k )_+^p$ vanishes if $z \cdot v_k(x) \in ]-\infty, \delta_k c_k(x)]$ so that any $u$ whose the gradient satisfies $\nabla u(x) \cdot v_k(x) \in ]-\infty, \delta_k c_k(x) ], \forall x \in \Omega, k=1,\dots, N$ is a solution of the previous PDE with $f=0$. In consequence, we cannot hope to obtain estimates on the second derivatives of $u$ or even oscillation estimates on $\nabla u$ from \eqref{3.4}. Nevertheless we will see that we have some regularity results on the vector field $\sigma = (\sigma_1, \dots, \sigma_d)$ that solves \eqref{PP} in the case where the directions and the volume coefficients are constant, that is,
\[
\sigma (x) = \sum_{k=1}^N \left [ b_k(x) (\nabla u (x) \cdot v_k - \delta_k c_k )_+^{p-1} \right ] v_k,
\]
for every $x \in \Omega$.

\section{Regularity when the $v_k$'s and $c_k$'s are constant}

Our aim here is to get some regularity results in the case where the $v_k$'s and the $c_k$'s are constant. We will strongly base on \cite{brasco2013congested} to prove this regularity result. Let us consider the model equation
\begin{equation} \label{4.10}
- \sum_{k=1}^N \text{div} \left ( (\nabla u(x) \cdot v_k - \delta_k c_k )_+^{p-1} v_k \right )  = f,
\end{equation}
where $v_k \in \Ss, c_k > 0$ and $b_k \equiv 1$ for $k=1, \dots, N$. Define for $z \in \RR^d$
\beq
F (z) = \sum_{k=1}^N F_k(z), \text{ with } F_k(z) =  (z \cdot v_k - \delta_k c_k)_+^{p-1}  v_k
\eeq
and
\beq
H (z) = \sum_{k=1}^N H_k(z), \text{ with } H_k(z) =  (z \cdot v_k - \delta_k c_k)_+^{\frac{p}{2}} v_k.
\eeq
Here we assume only $p \geq 2$. We have the following lemma that establishes some connections between $F$ and $H$.

\begin{lemme} \label{lem4.2}
Let $F$ and $G$ be defined as above with $p \geq 2$, then for every $(z,w) \in \RR^d \times \RR^d$, the following inequalities are true for $k=1, \dots, N$
 \beq \label{eq2}
 |F_k(z)| \leq |z|^{p-1},
 \eeq
 \beq \label{eq3}
|F_k(z) -F_k(w)| \leq (p-1) \left ( |H_k(z) |^{\frac{p-2}{p}} + |H_k(z) |^{\frac{p-2}{p}} \right ) |H_k(z) - H_k(w)|,
\eeq
and
\beq \label{eq4}
 (F_k(z) - F_k(w)) \cdot (z - w) \geq \frac{4}{p^2} |H_k(z) - H_k(w) |^2.
 \eeq
\end{lemme}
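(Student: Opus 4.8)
The plan is to reduce all three inequalities to scalar statements in a single real variable, exploiting that each $F_k$ and $H_k$ is a scalar multiple of the fixed unit vector $v_k$. Fix $k$, set $a=z\cdot v_k$, $b=w\cdot v_k$ and $\delta=\delta_k c_k\geq 0$, and write $\alpha=(a-\delta)_+$, $\beta=(b-\delta)_+$. Since $|v_k|=1$ we have $F_k(z)=\alpha^{p-1}v_k$ and $H_k(z)=\alpha^{p/2}v_k$ (and likewise with $\beta$ for $w$), so that $|F_k(z)|=\alpha^{p-1}$, $|H_k(z)|=\alpha^{p/2}$, $|F_k(z)-F_k(w)|=|\alpha^{p-1}-\beta^{p-1}|$ and $|H_k(z)-H_k(w)|=|\alpha^{p/2}-\beta^{p/2}|$. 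Thus every quantity in \eqref{eq2}--\eqref{eq4} is a function of the two nonnegative reals $\alpha,\beta$ alone (together with $a-b$ in the case of \eqref{eq4}), and the vector structure disappears.

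Inequality \eqref{eq2} is then immediate: since $\delta\geq 0$ one has $(a-\delta)_+\leq|a|=|z\cdot v_k|\leq|z|$, whence $|F_k(z)|=\alpha^{p-1}\leq|z|^{p-1}$. For \eqref{eq3} I would assume without loss of generality $\alpha\geq\beta$ and pass to the variable $s=r^{p/2}$, writing $r^{p-1}=\Phi(s)$ with $\Phi(s)=s^{2(p-1)/p}$, so that $\Phi'(s)=\tfrac{2(p-1)}{p}s^{(p-2)/p}$ and
\[
\alpha^{p-1}-\beta^{p-1}=\Phi(\alpha^{p/2})-\Phi(\beta^{p/2})=\frac{2(p-1)}{p}\int_{\beta^{p/2}}^{\alpha^{p/2}} s^{(p-2)/p}\,ds.
\]
Because $p\geq 2$ the integrand is nondecreasing, so on the interval it is bounded by its value $(\alpha^{p/2})^{(p-2)/p}=\alpha^{(p-2)/2}=|H_k(z)|^{(p-2)/p}$ at the right endpoint, giving
\[
|\alpha^{p-1}-\beta^{p-1}|\leq\frac{2(p-1)}{p}\,|H_k(z)|^{(p-2)/p}\,|H_k(z)-H_k(w)|.
\]
Since $2/p\leq 1$ and the bracket in \eqref{eq3} only adds the nonnegative term $|H_k(w)|^{(p-2)/p}$, this bound is in fact stronger than \eqref{eq3}; the symmetric case $\beta\geq\alpha$ produces the $|H_k(w)|$ term instead, which is exactly why the symmetric sum appears on the right of \eqref{eq3}.

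The main work is \eqref{eq4}. Here $(F_k(z)-F_k(w))\cdot(z-w)=(\alpha^{p-1}-\beta^{p-1})(a-b)$. The first step is to replace $a-b$ by $\alpha-\beta$: since $a-(a-\delta)_+=\min(a,\delta)$ is nondecreasing in $a$, the difference $(a-b)-(\alpha-\beta)=\min(a,\delta)-\min(b,\delta)$ has the same sign as $a-b$, hence the same sign as $\alpha^{p-1}-\beta^{p-1}$, and therefore $(\alpha^{p-1}-\beta^{p-1})(a-b)\geq(\alpha^{p-1}-\beta^{p-1})(\alpha-\beta)$. It then remains to prove the purely scalar bound $(\alpha^{p-1}-\beta^{p-1})(\alpha-\beta)\geq\tfrac{4}{p^2}(\alpha^{p/2}-\beta^{p/2})^2$ for $\alpha,\beta\geq 0$; assuming $\alpha\geq\beta$ and using Cauchy--Schwarz on the integral representations,
\[
(\alpha^{p/2}-\beta^{p/2})^2=\frac{p^2}{4}\Bigl(\int_\beta^\alpha r^{(p-2)/2}\,dr\Bigr)^2\leq\frac{p^2}{4}(\alpha-\beta)\int_\beta^\alpha r^{p-2}\,dr=\frac{p^2}{4(p-1)}(\alpha-\beta)(\alpha^{p-1}-\beta^{p-1}),
\]
and since $p-1\geq 1$ this yields exactly \eqref{eq4}.

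The only genuinely delicate point is the sign/monotonicity reduction from $a-b$ to $\alpha-\beta$ in \eqref{eq4}: it is what lets the truncation by $\delta_k c_k$ be absorbed without cost, and everything downstream is an elementary convexity-type estimate. I would also note two apparent typos in the statement to be corrected: the second summand on the right of \eqref{eq3} should read $|H_k(w)|^{\frac{p-2}{p}}$, and the opening phrase ``Let $F$ and $G$ be defined'' should be ``Let $F$ and $H$ be defined.''
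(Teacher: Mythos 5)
Your proof is correct, but it takes a genuinely different route from the paper's. The paper proves \eqref{eq3} and the hard case of \eqref{eq4} by quoting two known vector inequalities for the $p$-Laplacian (cited from Lindqvist): it substitutes $a=(z\cdot v_k-\delta_k c_k)_+v_k$ and $b=(w\cdot v_k-\delta_k c_k)_+v_k$ into
\[
\left| |a|^{p-2}a-|b|^{p-2}b \right| \leq (p-1)\left( |a|^{\frac{p-2}{2}}+|b|^{\frac{p-2}{2}} \right) \left| |a|^{\frac{p-2}{2}}a-|b|^{\frac{p-2}{2}}b \right|
\]
and into its monotonicity counterpart, and for \eqref{eq4} it runs a case analysis on whether $z\cdot v_k$ and $w\cdot v_k$ exceed the threshold $\delta_k c_k$: both below is trivial, one above is a direct computation, and when both are above it uses that $(z-w)\cdot v_k=\alpha-\beta$ exactly, so the quoted inequality applies verbatim. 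You instead reduce everything to scalar statements and prove them from scratch: an integral representation plus monotonicity of the integrand for \eqref{eq3}, and Cauchy--Schwarz for \eqref{eq4}. The nicest point in your argument is the observation that $(a-b)-(\alpha-\beta)=\min(a,\delta)-\min(b,\delta)$ has the same (weak) sign as $a-b$, which absorbs the truncation uniformly and replaces the paper's three-case analysis by a single monotonicity step. Your approach buys self-containedness (no external inequality needed), slightly sharper constants ($\tfrac{2(p-1)}{p}$ in \eqref{eq3} and $\tfrac{4(p-1)}{p^2}$ in \eqref{eq4}), and a case-free treatment; the paper's approach buys brevity by outsourcing the elementary estimates to standard $p$-Laplacian inequalities. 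Your two corrections to the statement are also right: the second summand in \eqref{eq3} should be $|H_k(w)|^{\frac{p-2}{p}}$ (the paper's own application of the cited inequality produces exactly that term), and the hypothesis should read ``Let $F$ and $H$ be defined as above.''
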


\bpr
The first one is trivial.
For the second one, from \cite{Lindqvist} one has the general result: for all $(a,b) \in \RR^d \times \RR^d$, the following inequality holds
\beq \label{4.13}
\left | |a|^{p-2} a - |b|^{p-2} b \right | \leq (p-1) \left ( |a|^{\frac{p-2}{2}} + |b|^{\frac{p-2}{2}} \right ) \left | |a|^{\frac{p-2}{2}} a - |b|^{\frac{p-2}{2}} b \right |.
\eeq
Choosing $a = (z \cdot v_k - \delta_k c_k)_+ v_k$ and $b = (w \cdot v_k - \delta_k c_k )_+ v_k$ in \eqref{4.13}, we then obtain \eqref{eq3}.

Let us now prove the third inequality. It is trivial if both $z \cdot v_k$ and $w \cdot v_k$ are less than $\delta_k c_k$. If $z \cdot v_k > \delta_k c_k$ and $w \cdot v_k \leq \delta_k c_k$, we have
\[
 (F_k(z) - F_k(w)) \cdot (z - w) = (z \cdot v_k - \delta_k c_k)_+^{p-1} ( z \cdot v_k - w \cdot v_k) \geq (z \cdot v_k - \delta_k c_k)_+^p = |H_k(z)|^2.
 \]
 For the case $z \cdot v_k > \delta_k c_k$ and $w \cdot v_k > \delta_k c_k$, we use the following inequality (again \cite{Lindqvist})
 \[
 (|a|^{p-2} a - |b|^{p-2} b ) \cdot (a-b) \geq \frac{4}{p^2} \left ( |a|^{\frac{p-2}{2}} a - |b|^{\frac{p-2}{2}} b \right )^2.
  \]
  Again taking $a = (z \cdot v_k - \delta_k c_k)_+ v_k$ and $b = (w \cdot v_k - \delta_k c_k)_+ v_k$, we have that
\bal
  \frac{4}{p^2} |H_k(z) & - H_k(w) |^2 \\
  & \leq (|F_k(z)| - |F_k(w)|)v_k \cdot ((z \cdot v_k - \delta_k c_k)_+ - (w \cdot v_k - \delta_k c_k )_+) v_k \\
  & = (|F_k(z)| - |F_k(w)|) (z-w) \cdot v_k,
 \end{align*}
 which gives \eqref{eq4}.
   \epr

Let us fix $f \in W_{\loc}^{1,q}(\Omega)$ where $q$ is the conjugate exponent of $p$ and let us consider the equation
\begin{equation} \label{4.7}
-\text{div} F(\nabla u) = f.
\end{equation}

Thanks to Nirenberg's method of incremental ratios, we then have the following result that is strongly inspired of Theorem $4.1$ in \cite{brasco2013congested}:
\begin{theo} \label{thmreg}
Let $u \in W_{\loc}^{1,p}(\Omega)$ be a local weak solution of \eqref{4.7}. Then $\Hc := H(\nabla u) \in W_{\loc}^{1,2}(\Omega)$. More precisely, for every $k=1, \dots, N, \Hc_k :=  H_k(\nabla u) \in W_{\loc}^{1,2}(\Omega)$.
\end{theo}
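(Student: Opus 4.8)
The plan is to use Nirenberg's method of incremental ratios (difference quotients), exactly as in Theorem 4.1 of \cite{brasco2013congested}, exploiting the fact that here the $v_k$'s, $c_k$'s and $\delta_k$'s are constant, so that the nonlinearity $F$ in \eqref{4.7} carries no explicit dependence on $x$. This translation invariance of the principal part is precisely what makes the argument run. I would fix a ball $B \Subset \Omega$ and a cut-off $\eta \in C_c^{\infty}(\Omega)$ with $0 \leq \eta \leq 1$, $\eta \equiv 1$ on $B$ and $\operatorname{supp} \eta \subset B' \Subset \Omega$, and for $h \in \RR^d$ small (so that $B'+h \Subset \Omega$) I write $\tau_h w(x) := w(x+h) - w(x)$.

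Starting from the weak formulation $\int_{\Omega} F(\nabla u) \cdot \nabla \varphi = \int_{\Omega} f \varphi$ for compactly supported $\varphi \in W^{1,p}$, I would shift $x \mapsto x+h$, use that $F$ does not depend on $x$, and subtract, obtaining the differenced identity $\int_{\Omega} \tau_h(F(\nabla u)) \cdot \nabla \varphi = \int_{\Omega} (\tau_h f)\, \varphi$. Inserting the admissible test function $\varphi = \eta^2\, \tau_h u$, whose gradient is $\eta^2\, \tau_h \nabla u + 2\eta\, \tau_h u\, \nabla \eta$, splits the left-hand side into a principal term $\int \eta^2\, \tau_h(F(\nabla u)) \cdot \tau_h \nabla u$ and a commutator term $2\int \eta\, \tau_h u\, \tau_h(F(\nabla u)) \cdot \nabla \eta$.

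For the principal term I would invoke the summed-over-$k$ form of the monotonicity inequality \eqref{eq4} of Lemma \ref{lem4.2}, giving the coercive lower bound $\int \eta^2\, \tau_h(F(\nabla u)) \cdot \tau_h \nabla u \geq \frac{4}{p^2} \sum_k \int \eta^2 |\tau_h \Hc_k|^2$, which is the good term producing exactly the quantity to be controlled. It then remains to bound the commutator term and the right-hand side by $C|h|^2$, up to a piece absorbable into the good term. For the commutator I would use \eqref{eq3} to estimate $|\tau_h F_k(\nabla u)| \leq (p-1)\big(|H_k(\nabla u(\cdot+h))|^{(p-2)/p} + |\Hc_k|^{(p-2)/p}\big)\, |\tau_h \Hc_k|$, then apply Young's inequality to split off a small multiple of $\eta^2 |\tau_h \Hc_k|^2$ (absorbed on the left) against a remainder of the form $\int |\nabla \eta|^2 |\tau_h u|^2 \big(|H_k(\nabla u(\cdot+h))|^{2(p-2)/p} + |\Hc_k|^{2(p-2)/p}\big)$. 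The crucial bookkeeping is that $|\Hc_k|^{2(p-2)/p} = (\nabla u \cdot v_k - \delta_k c_k)_+^{p-2} \in L^{p/(p-2)}$ since $\nabla u \in L^p$, while $|\tau_h u|^2 \in L^{p/2}$ with $\||\tau_h u|^2\|_{L^{p/2}} = \|\tau_h u\|_{L^p}^2 \leq C|h|^2 \|\nabla u\|_{L^p}^2$; the conjugate exponents $p/2$ and $p/(p-2)$ close the Hölder estimate and produce the factor $|h|^2$. The right-hand side $\int (\tau_h f)\, \eta^2 \tau_h u$ is controlled by $\|\tau_h f\|_{L^q} \|\eta^2 \tau_h u\|_{L^p} \leq C|h|^2 \|\nabla f\|_{L^q(B')} \|\nabla u\|_{L^p(B')}$, which is exactly where $f \in W^{1,q}_{\loc}(\Omega)$ enters.

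Finally I would collect the estimates, divide through by $|h|^2$, and absorb the small term, obtaining $\sup_{0 < |h| < h_0} \sum_k \int_B |\tau_h \Hc_k|^2 / |h|^2\, dx < \infty$. By the standard characterization of Sobolev functions through difference quotients, this uniform bound yields $\Hc_k = H_k(\nabla u) \in W^{1,2}(B)$ for each $k$, hence $\Hc = H(\nabla u) \in W^{1,2}_{\loc}(\Omega)$, as claimed. I expect the main obstacle to be the exponent bookkeeping in the commutator term: ensuring that the powers of $\Hc_k$ and of $\tau_h u$ land in conjugate Lebesgue spaces and that the total homogeneity in $h$ is exactly $|h|^2$, so that after division by $|h|^2$ all remainder terms stay bounded uniformly in $h$.
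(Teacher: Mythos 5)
Your proposal is correct and follows essentially the same route as the paper's proof: Nirenberg's incremental-ratio method with the test function $\xi^2(\tau_h u - u)/|h|$, coercivity of the principal term via the monotonicity inequality \eqref{eq4}, and control of the commutator term via \eqref{eq3} together with H\"older and the standard $L^p$ difference-quotient bounds. The only cosmetic difference is that you absorb the commutator contribution by Young's inequality, whereas the paper keeps it linear and concludes from the resulting quadratic inequality in $\|\xi(\tau_h\Hc_k-\Hc_k)/|h|\|_{L^2}$; the two bookkeeping schemes are equivalent.
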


\bpr
For the sake of clarity, write $\Fc := F(\nabla u)$ and similarly, $\Fc_k, \Hc_k$ (note that $\Fc_k \in L_{\loc}^q(\Omega)$ and $\Hc_k \in L_{\loc}^2(\Omega)$ due to \eqref{eq2}-\eqref{eq3}. Let us define the translate of the function $\vp$ by the vector $h$ by $\tau_h \vp := \vp (\cdot + h)$. Let $\vp \in W^{1,q}(\Omega)$ be compactly supported in $\Omega$ and $h \in \RR^d \backslash \{0\}$ be such that $|h| < \text{dist(supp}(\vp), \RR^d \backslash \{0\})$, we then have
\beq \label{eq9}
\IO \frac{\tau_h \Fc - \Fc}{|h|} \cdot \nabla \vp dx = \IO \frac{\tau_h f - f}{|h|} \cdot \vp dx.
\eeq

Let $\omega \Subset \omega_0 \Subset \Omega$ and $\xi \in C_c^{\infty}(\Omega)$ such that supp($\xi$) $\subset \omega_0, 0 \leq \xi \leq 1$ and $\xi = 1$ on $\overline{\omega}$ and $h \in \RR^d \backslash \{0\}$ such that $|h| \leq r_0 < \frac{1}{2} \text{dist}(\omega_0, \RR^d \backslash \Omega)$. In what follows, we denote by $C$ a nonnegative constant that does not depend on $h$ but may change from one line to another. We then introduce the test function
\begin{equation*}
\vp = \xi^2 |h|^{-1} (\tau_h u -u),
\end{equation*}
in \eqref{eq9}. Let us fix $\omega' := \omega_0 + B(0, r_0)$. It follows from $u \in W_{\loc}^{1,p}(\Omega), f \in W_{\loc}^{1,q}(\Omega)$ and the H\"older inequality that
\begin{equation*}
|h|^{-2} \IO (\tau_h \Fc - \Fc) \cdot \left ( \xi^2 (\tau_h \nabla u - \nabla u) + 2 \xi \nabla \xi (\tau_h u - u) \right ) \leq \| \nabla f \|_{L^q(\omega')} \| \nabla u \|_{L^p(\omega')}.
\end{equation*}
The left-hand side of the previous inequality is the sum of $2N$ terms $I_{11}+ I_{12} + \ldots + I_{N1} + I_{N2}$ where for every $k=1, \dots, N$,
\[
I_{k1}:= |h|^{-2} \IO \xi^2 (F_k(\tau_h \nabla u) - F_k(\nabla u) \cdot (\tau_h \nabla u - \nabla u),
\]
and
\[
I_{k2}:= |h|^{-2} \IO \xi^2 (F_k(\tau_h \nabla u) - F_k(\nabla u) \cdot \nabla \xi \xi (\tau_h u - u).
\]
Let $k=1, \dots, N$ fixed. We will find estimations on $I_{k1}$ and $I_{k2}.$
Due to \eqref{eq3}, $I_{k1}$ satisfies:
\[
I_{k1} \geq  \frac{4}{p^2} \| \xi |h|^{-1} (\tau_h \Hc_k - \Hc_k) \|_{L^2}^2.
\]
For $I_{k2}$, if $p > 2$, it follows from \eqref{eq4} and the H\"older inequality with exponents $2, p$ and $2p/(p-2)$ that
\bal
& |I_{k2}| \leq |h|^{-2} \IO | \xi \nabla \xi | | \tau_h u - u | |\tau_h \Hc_k - \Hc_k | \left ( |\tau_h \Hc_k |^{\frac{p-2}{p}} + |\Hc_k |^{\frac{p-2}{p}} \right ) \\
& \leq C \| |h|^{-1} (\tau_h u - u ) \|_{L^p(\omega_0)} \| \xi |h|^{-1} (\tau_h \Hc_k - \Hc_k) \|_{L^2} \left ( \int_{\omega_0} |\Hc_k|^2 + | \tau_h \Hc_k |^2 \right )^{\frac{p-2}{2p}} \\
& \leq C  \| \xi |h|^{-1} (\tau_h \Hc_k - \Hc_k) \|_{L^2},
\end{align*}
and if $p=2$, we simply use Cauchy-Schwarz inequality and we get :
\[
|I_{k2}| \leq C \| \xi |h|^{-1} (\tau_h \Hc_k - \Hc_k) \|_{L^2}.
\]
Bringing together all estimates, we then obtain
\[
\sum_{k=1}^N \left \| \xi \frac{\tau_h \Hc_k - \Hc_k}{h} \right \|_{L^2}^2 \leq C \left ( 1 + \sum_{k=1}^N \left \| \xi \frac{\tau_h \Hc_k - \Hc_k}{h} \right \|_{L^2} \right ).
\]
and we finally get
\[
\sum_{k=1}^N \left \| \frac{\tau_h \Hc_k - \Hc_k}{h} \right \|_{L^2(\omega)}^2 \leq C,
\]
for some constant $C$ that depends on $p, \|f\|_{W^{1,q}}, \|u\|_{W^{1,p}}$ and the distance between $\omega$ and $\partial \Omega$, but not on $h$. We have the desired result, that is, $\Hc_k \in W_{\loc}^{1,2}(\Omega)$, for $k=1, \dots, N$, and so $\Hc$ also.
\epr

If we consider the variational problem of Beckmann type
\beq \label{4.15}
\inf_{\sigma \in L^q(\Omega)} \left \{ \IO \inf_{\vr \in \Pc_x^{\sigma}} \sum_{k=1}^N c_k \left ( \frac{1}{q} \vr_k^q + \delta_k \vr_k \right ) : - \text{div } \sigma = f \right \},
\eeq
we then have the following Sobolev regularity result for the unique minimizer that generalizes Corollary $4.3$ in \cite{brasco2013congested}.
\begin{coro}
The solution $\sigma$ of \eqref{4.15} is in the Sobolev space $W_{\loc}^{1,r}(\Omega)$, where
$$
r = \left \{
\begin{aligned}
& 2 & \text{if } & p = 2, \\
& \text{any value } < 2, & \text{if } & p > 2 \text{ and } d = 2, \\
& \frac{dp}{dp-(d+p)+2}, & \text{if } & p > 2 \text{ and } d > 2.
\end{aligned} \right.
$$
\end{coro}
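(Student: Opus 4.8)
The plan is to deduce everything from Theorem~\ref{thmreg} together with the pointwise algebraic relation between $F$ and $H$, via a Sobolev embedding and one application of H\"older's inequality. First I would record that, by the primal--dual optimality discussed in Section~3, the minimizer $\sigma$ of \eqref{4.15} is $\sigma = \sum_{k=1}^N b_k \Fc_k$, where $u$ is a solution of the associated dual problem, $\Fc_k = F_k(\nabla u)$, and the $b_k = (c_k)^{-1/(q-1)}$ are positive constants (these constant coefficients are harmless for local Sobolev regularity, so one may argue exactly as for the model equation \eqref{4.7}). The key elementary observation is the identity
\[
F_k(z) = |H_k(z)|^{\frac{p-2}{p}} H_k(z), \qquad z \in \RR^d,
\]
which follows by writing $t_k := (z\cdot v_k - \delta_k c_k)_+$, so that $H_k(z) = t_k^{p/2} v_k$, $|H_k(z)| = t_k^{p/2}$, and $t_k^{(p-2)/2}\, t_k^{p/2} = t_k^{p-1}$. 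Since $v_k$ is a fixed unit vector, $\Hc_k = s_k v_k$ with $s_k := |\Hc_k| = (\nabla u \cdot v_k - \delta_k c_k)_+^{p/2} \ge 0$, and hence
\[
\sigma = \sum_{k=1}^N b_k\, s_k^{1+\beta}\, v_k, \qquad \beta := \frac{p-2}{p} \in [0,1).
\]

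Next I would invoke Theorem~\ref{thmreg}, which gives $\Hc_k \in W^{1,2}_{\loc}(\Omega)$, hence $s_k = \Hc_k \cdot v_k \in W^{1,2}_{\loc}(\Omega)$ with $s_k \ge 0$. When $p = 2$ we have $\beta = 0$, so $\sigma = \sum_k b_k s_k v_k \in W^{1,2}_{\loc}(\Omega)$ directly, giving $r = 2$. When $p > 2$, the scalar map $t \mapsto t^{1+\beta}$ is $C^1$ on $[0,\infty)$ (its derivative $(1+\beta)t^{\beta}$ vanishes at $0$), so the chain rule for Sobolev functions applies to the nonnegative $s_k \in W^{1,2}_{\loc}$ and yields $\nabla(s_k^{1+\beta}) = (1+\beta)\, s_k^{\beta}\, \nabla s_k$ in the weak sense. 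Therefore $|\nabla \sigma| \le C \sum_{k=1}^N s_k^{\beta} |\nabla s_k|$ a.e.

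It then remains to estimate the right-hand side. I have $\nabla s_k \in L^2_{\loc}$, while the Sobolev embedding $W^{1,2}(\Omega) \hookrightarrow L^{2^*}(\Omega)$ (with $2^* = 2d/(d-2)$ for $d>2$, and any finite exponent for $d=2$) gives $s_k \in L^{2^*}_{\loc}$, so $s_k^{\beta} \in L^{2^*/\beta}_{\loc}$. H\"older's inequality with exponents $2^*/\beta$ and $2$ then yields $s_k^{\beta}|\nabla s_k| \in L^r_{\loc}$ with $\frac1r = \frac12 + \frac{\beta}{2^*}$. For $d > 2$ a direct computation gives $\frac1r = \frac{dp - (d+p) + 2}{dp}$, which is the stated value; for $d = 2$ one may take $2^*$ arbitrarily large, so $\frac1r$ can be made arbitrarily close to $\frac12$ from above, whence $\sigma \in W^{1,r}_{\loc}(\Omega)$ for every $r < 2$.

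I expect no serious obstacle here: the entire analytic content sits in Theorem~\ref{thmreg}, and the corollary is its combination with a Sobolev embedding and one H\"older estimate. The only points requiring a little care are the justification of the chain rule at the degeneracy $s_k = 0$ (handled by the $C^1$ regularity of $t \mapsto t^{1+\beta}$, or, if one prefers, by working with $(s_k + \eta)^{1+\beta}$ and letting $\eta \to 0^+$) and the bookkeeping of the H\"older exponents that produces the three cases. The loss of the endpoint $r = 2$ when $d = 2$ and $p > 2$ is genuine, reflecting the failure of $W^{1,2} \hookrightarrow L^{\infty}$ in two dimensions.
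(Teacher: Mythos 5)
Your proposal is correct, and it reaches the stated exponents by the same overall strategy as the paper: the decomposition $\sigma_k = |\Hc_k|^{\frac{p-2}{p}}\Hc_k$, the regularity $\Hc_k \in W^{1,2}_{\loc}(\Omega)$ from \ref{thmreg}, the Sobolev embedding into $L^{2^*}_{\loc}$, and one H\"older estimate with $\frac{1}{r} = \frac{1}{2} + \frac{p-2}{p\,2^*}$, whose arithmetic indeed gives $r = \frac{dp}{dp-(d+p)+2}$ for $d>2$ and any $r<2$ for $d=2$. Where you genuinely depart from the paper is the device that transfers regularity from $\Hc_k$ to $\sigma_k$. The paper never invokes a chain rule: it applies the pointwise inequality \eqref{eq3} of \ref{lem4.2} with $z = \tau_h \nabla u$ and $w = \nabla u$ to get
\[
\left| \frac{\tau_h \sigma_k - \sigma_k}{|h|} \right| \leq (p-1) \left ( |\tau_h \Hc_k |^{\frac{p-2}{p}} + |\Hc_k |^{\frac{p-2}{p}} \right ) \left | \frac{\tau_h \Hc_k - \Hc_k}{|h|} \right |,
\]
and then bounds the difference quotients of $\sigma_k$ in $L^r_{\loc}$ uniformly in $h$, which characterizes $\sigma_k \in W^{1,r}_{\loc}(\Omega)$. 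You instead exploit the constancy of $v_k$ to reduce to the scalar $s_k = \Hc_k \cdot v_k \geq 0$ and differentiate the power $s_k^{1+\beta}$ exactly. Your route is arguably more transparent in this constant-direction setting, but the paper's is more robust: \eqref{eq3} is a purely pointwise inequality valid for genuinely vector-valued $H_k$, so the difference-quotient argument is the one that has a chance to survive the generalization to nonconstant $v_k(x)$ raised at the end of Section 4, and it sidesteps entirely the one delicate point of your argument, namely the chain rule for the $C^1$ but non-Lipschitz function $t \mapsto t^{1+\beta}$. On that point, note that your proposed regularization $(s_k+\eta)^{1+\beta}$ cures the (harmless) degeneracy at $s_k=0$ but not the actual difficulty, which is the unboundedness of the derivative $(1+\beta)t^{\beta}$ at infinity; the clean fix is truncation $s_k \mapsto \min(s_k, M)$ followed by $M \to \infty$, using as dominating function exactly the $s_k^{\beta}|\nabla s_k| \in L^r_{\loc} \subset L^1_{\loc}$ that your H\"older step provides. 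So your argument is complete once that standard truncation is made explicit.
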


\bpr
By duality, we know the relation between $\sigma$ and any solution of the dual problem $u$
\[
\sigma = \sum_{k=1}^N (\nabla u \cdot v_k - \delta_k c_k)_+^{p-1} v_k.
\]
Since $u \in W^{1,q}(\Omega)$ is a weak solution of the Euler-Lagrange equation \eqref{4.10}, using \ref{thmreg} and \ref{lem4.2}, we have that the vector fields
\[
\Hc_k(x) =  (\nabla u (x) \cdot v_k - \delta_k c_k)_+^{\frac{p}{2}} v_k, \:  \: k=1, \dots, N,
\]
are in $W_{\loc}^{1,2}(\Omega)$. We then notice that $\sigma = \sum_{k=1}^N \sigma_k$ with
\[
\sigma_k = |\Hc_k|^{\frac{p-2}{p}} \Hc_k, \: \: k=1, \dots, N.
\]
The first case is trivial: we simply have $\sigma_k = \Hc_k \in W_{\loc}^{1,2}(\Omega)$. For the other cases, we use the Sobolev theorem. If $p>2$ and $d > 2$ then $\Hc_k \in L_{\loc}^{2*}(\Omega)$ with
\[
\frac{1}{2^*} = \frac{1}{2} - \frac{1}{N}.
\]
Applying \eqref{eq3} with $z= \tau_h \nabla u$ and $w= \nabla u$, we have
\[
\left | \frac{\tau_h \sigma_k - \sigma_k}{|h|} \right | \leq (p-1) \left ( |\tau_h \Hc_k |^{\frac{p-2}{p}} + |\Hc_k |^{\frac{p-2}{p}} \right ) \left | \frac{\tau_h \Hc_k - \Hc_k}{|h|} \right |.
\]
Since $ |\Hc_k |^{\frac{p-2}{p}} \in L_{\loc}^{\frac{2^* p}{p-2}}(\Omega)$, we have that the right-hand side term is in $L_{\loc}^r(\Omega)$ with $r$ given by
\[
\frac{1}{r} = \frac{p-2}{2^* p} + \frac{1}{2}.
\]
We can then control this integral
\[
\int \left | \frac{\tau_h \sigma_k - \sigma_k}{|h|} \right |^r dx.
\]
For the case $p>2$ and $d=2$, it follows from the same theorem that $\Hc_k \in L_{\loc}^s(\Omega)$ for every $s < +\infty$ and the same reasoning allows us to conclude.
\epr

This Sobolev regularity result can be extended to equations with weights such as
\begin{equation}
- \sum_{k=1}^N \text{div} \left ( b_k(x) (\nabla u(x) \cdot v_k - \delta_k c_k)_+^{p-1} v_k \right )  = f.
\end{equation}

An open problem is to investigate if one can generalize this Sobolev regularity result to the case where the $v_k$'s and $c_k$'s are in $C^1(\Ob)$.

\section{Numerical simulations}

\subsection{Description of the algorithm} \label{desalg}

We numerically approximate by finite elements solutions of the following minimization problem:
\beq \label{eq21}
\inf_{u \in W^{1,p}(\Omega)} J(u) := \mathbf{G}^*(\nabla u) - \langle f, u \rangle
\eeq
with $\mathbf{G}^*(\Phi) = \int_{\Omega} \mathcal{G}^*(x, \Phi(x)) \: dx$ for $\Phi \in L^p(\Omega)^d$ and $\langle f, w \rangle =  \int_{\Omega} u \: df$ for $w \in L^p(\Omega)$.
Let us recall that $\Omega$ is a bounded domain of $\RR^d$ with Lipschitz boundary and $f=f_+ - f_-$ is in the dual of $W^{1,p}(\Omega)$ with zero mean $\IO f = 0$.
We will use the augmented Lagrangian method described in \cite{benamou2013augmented} (that we will recall later). ALG2 is a particular case of the Douglas-Rachford splitting method for the sum of two nonlinear operators (see \cite{lions1979splitting} or more recently \cite{papadakis2014optimal}). ALG2 was used for transport problems for the first time in \cite{benamou2000computational}. Let a regular triangulation of $\Omega$ with typical meshsize $h$, let $E_h \subset W^{1,p}(\Omega)$  be the corresponding finite-dimensional space of $P_2$ finite elements of order $2$ whose generic elements are denoted $u_h$. Moreover, we approximate the terms $f$ by $f_h \in E_h$ (again with $\langle f_h, 1 \rangle = 0$) and $\mathbf{G}$ by a convex function $\mathbf{G}_h$. Let us consider the approximating problem
\beq \label{ppa}
\inf_{u_h \in E_h} J_h(u_h) := \mathbf{G}_h^*(\nabla u_h) - \langle f_h, u_h \rangle.
\eeq
and its dual
\beq \label{pda}
\sup_{\sigma_h \in F_h^d} \{ - \mathbf{G}_h(\sigma_h) : -\text{div}_h(\sigma_h) = f_h \}
\eeq
where $F_h$ is the space of $P_1$ finite elements of order $1$ and $-\text{div}_h(\sigma_h)$ may be understood as
\[
\langle \sigma_h, \nabla u_h \rangle_{F_h^d} = - \langle \text{div}_h (\sigma_h), u_h \rangle_{E_h}.
\]

\begin{theo}
If $u_h$ solves \eqref{ppa} then up to a subsequence, $u_h$ converges weakly in $W^{1,p}(\Omega)$ to a $u$ that solves \eqref{eq21} as $h \rightarrow 0$.
\end{theo}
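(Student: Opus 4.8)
The plan is to argue by the direct method, effectively establishing $\Gamma$-convergence of $J_h$ to $J$ together with an equi-coercivity estimate, so that discrete minimizers converge to a continuous minimizer. I would split the argument into three steps: uniform a priori bounds yielding weak compactness of $(u_h)$, a $\liminf$ (lower-semicontinuity) inequality, and the construction of a recovery sequence giving the matching $\limsup$ inequality, from which minimality of the weak limit follows.

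First I would obtain the a priori bounds. Since $f$ (and $f_h$) has zero mean, $J_h(u_h + c) = J_h(u_h)$ for every constant $c$, so I may normalize each minimizer to have zero mean over $\Omega$. The growth assumption \eqref{H3} on $G$ transfers to a $q$-growth-from-above bound on $\mathcal{G}(x,\cdot)$ (using \ref{hy5} to select a representation $\vr \in \Pc_x^{\sigma}$ with $|\vr| \le C|\sigma|$), hence to the coercivity estimate $\mathcal{G}^*(x,z) \ge c|z|^p - C$ for the Legendre transform; assuming the discretization inherits this growth uniformly in $h$, one gets $\mathbf{G}_h^*(\nabla u_h) \ge c\|\nabla u_h\|_{L^p}^p - C$. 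Testing optimality against the competitor $0$ gives $J_h(u_h) \le J_h(0) = \mathbf{G}_h^*(0) \le C$, while $|\langle f_h, u_h\rangle| \le \|f_h\|_{(W^{1,p})^*}\|u_h\|_{W^{1,p}}$. Combining these with the Poincar\'e--Wirtinger inequality on the bounded Lipschitz domain $\Omega$ (valid for the zero-mean $u_h$) bounds $\|u_h\|_{W^{1,p}(\Omega)}$ uniformly in $h$. By reflexivity of $W^{1,p}$ I extract a subsequence with $u_h \rightharpoonup u$ weakly in $W^{1,p}(\Omega)$, so that $\nabla u_h \rightharpoonup \nabla u$ weakly in $L^p$ and, by the Rellich--Kondrachov compact embedding, $u_h \to u$ strongly in $L^p$.

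For the $\liminf$ inequality, I would use that $\mathcal{G}^*(x,\cdot)$ is convex, so $\Phi \mapsto \mathbf{G}^*(\Phi)$ is weakly lower semicontinuous on $L^p(\Omega)^d$; combined with $f_h \to f$ in $(W^{1,p})^*$ and $u_h \rightharpoonup u$ (whence $\langle f_h,u_h\rangle \to \langle f,u\rangle$) and the convergence $\mathbf{G}_h^* \to \mathbf{G}^*$ of the discretized integrands, this yields $J(u) \le \liminf_h J_h(u_h)$. For the matching upper bound I would fix an arbitrary $v \in W^{1,p}(\Omega)$, approximate it first by a smooth function on $\Ob$ and then by its $P_2$ finite element interpolant $v_h \in E_h$, arranging $v_h \to v$ strongly in $W^{1,p}(\Omega)$; by continuity of $\mathbf{G}^*$ under strong $L^p$ convergence (the $p$-growth bound furnishing a dominating function) together with the convergences of $\mathbf{G}_h^*$ and $f_h$, one obtains $J_h(v_h) \to J(v)$. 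Minimality of $u_h$ then gives $J(u) \le \liminf_h J_h(u_h) \le \limsup_h J_h(u_h) \le \limsup_h J_h(v_h) = J(v)$, and since $v$ is arbitrary, $u$ solves \eqref{eq21}.

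The main obstacle is the construction of the recovery sequence with the correct energy convergence: the $P_2$ interpolation operator is not bounded on all of $W^{1,p}$, so I would need a density-plus-diagonalization argument (approximate $v$ by $C^\infty(\Ob)$ functions, interpolate those, and extract a diagonal subsequence), while simultaneously controlling the passages $\mathbf{G}_h^* \to \mathbf{G}^*$ and $f_h \to f$ along mesh refinement. The delicate point is that $\mathcal{G}^*$ is degenerate and merely $C^1$, not uniformly convex, so one cannot invoke uniform convexity to identify the limit; the argument must go solely through convexity (for the $\liminf$) and the $p$-growth upper bound (for the $\limsup$), which is precisely the structure exploited above.
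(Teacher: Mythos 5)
Your proof is correct in substance, and the comparison here is lopsided: the paper does not actually prove this theorem at all --- its entire proof is one sentence invoking a general theorem from \cite{benamou2013augmented} (see also \cite{gabay1976dual}). What you wrote out is essentially the content of that black box: equi-coercivity (zero-mean normalization plus the bound $\mathcal{G}^*(x,z)\ge c|z|^p-C$, which you correctly derive from the $q$-growth upper bound on $\mathcal{G}$ coming from \eqref{H3} and the bounded selection of \ref{hy5}), a liminf inequality from convexity, and a recovery sequence by smoothing and finite-element interpolation; so in effect you have supplied the proof that the paper delegates to a citation, which is what a self-contained argument buys, at the cost of having to make the discretization hypotheses explicit. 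On that point, two caveats, both of which you at least partially flag. First, since the paper says only that $\mathbf{G}_h$ is ``a convex function'' approximating $\mathbf{G}$, the statement is unprovable without consistency assumptions: $f_h\to f$ in $(W^{1,p}(\Omega))^*$, coercivity of $\mathbf{G}_h^*$ uniform in $h$, and convergence of $\mathbf{G}_h^*$ to $\mathbf{G}^*$ strong enough to pass to weak limits --- e.g.\ Mosco convergence, or a one-sided bound $\mathbf{G}_h^*\ge \mathbf{G}^*-\ep_h$ with $\ep_h\to 0$ together with convergence along strongly convergent sequences. These are precisely the hypotheses of the cited theorem, so assuming them is legitimate rather than a defect. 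Second, and this is the only real soft spot in your write-up, your liminf step needs $\liminf_h \mathbf{G}_h^*(\nabla u_h)\ge \mathbf{G}^*(\nabla u)$ along the \emph{weakly} convergent sequence of gradients; weak lower semicontinuity of $\mathbf{G}^*$ combined with an unquantified ``$\mathbf{G}_h^*\to\mathbf{G}^*$'' does not give this by itself, whereas either form of the approximation hypothesis above makes the step immediate, so state it in that form. A minor simplification: since $q<d/(d-1)$ forces $p>d$, we have $W^{1,p}(\Omega)\hookrightarrow C(\Ob)$, so the $P_2$ Lagrange interpolant is in fact well defined and stable on all of $W^{1,p}(\Omega)$, and your density-plus-diagonalization argument, while correct, is more caution than is needed.
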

It is a direct application of a general theorem (see \cite{benamou2013augmented} and \cite{gabay1976dual} for similar results and more details). Using the discretization by finite elements, \eqref{eq21} becomes
\beq \label{eq31}
\inf_{u \in \RR^n} J(u) := \Ff(u) + \Gf^*(\Lambda u)
\eeq
where $\Ff : \RR^n \rightarrow \RR \cup \{ + \infty \}, \Gf : \RR^m \rightarrow \RR \cup \{ + \infty \}$ are two convex l.s.c. and proper functions and $\Lambda$ is an $m \times n$ matrix with real entries. $\Lambda$ is the discrete analogue of $\nabla$. The dual of \eqref{eq31} then reads as
\beq \label{eq32}
\sup_{\sigma \in \RR^m} - \Ff^*(-\Lambda^T \sigma) - \Gf(\sigma)
\eeq

We say that a pair $(\bar{u}, \bar{\sigma}) \in \RR^n \times \RR^m$ satisfies the primal-dual extremality relations if:
\beq \label{eq33}
- \Lambda^T \bar{\sigma} \in \partial \Ff(\bar{u}), \bar{\sigma} \in \partial \Gf^*(\Lambda \bar{u}).
\eeq
It means that $\bar{u}$ solves \eqref{eq31} and that $\bar{\sigma}$ solves \eqref{eq32} and moreover, \eqref{eq31} and \eqref{eq32} have the same value (no duality gap). It is equivalent to find a saddle-point of the augmented Lagrangian function for $r>0$ (see \cite{fortin1983augmented, gabay1976dual} for example)
\beq \label{lagd}
L_r(u, q, \sigma) := \Ff(u) + \Gf^*(q) + \sigma \cdot (\Lambda u - q) + \frac{r}{2} | \Lambda u - q |^2, \: \forall (u,q,\sigma) \in \RR^n \times \RR^m \times \RR^m.
\eeq
It is the discrete formulation of the corresponding augmented Lagrangian function
\beq \label{lag}
%L_r(u,q, \sigma) := \IO ( \mathcal{G}^*(x, q(x)) - u(x) f(x) + \sigma (x) \cdot (\nabla u(x) - q(x)) + \frac{r}{2} |\nabla u(x) - q(x) |^2) \: dx
L_r(u,q, \sigma) := \IO \mathcal{G}^*(x, q(x)) \: dx - \langle u, f \rangle + \langle \sigma, \nabla u - q \rangle + \frac{r}{2} |\nabla u(x) - q(x) |^2
\eeq
and the variational problem of \eqref{eq31} is
\begin{equation}
\label{eq40}
\inf_{u,q} \left \{ \int_{\Omega} \mathcal{G}^*(x, q(x)) \: dx - \int_{\Omega} u(x) f(x) \: dx  \right \}.
\end{equation}

subject to the constraint that $\nabla u = q$.

The augmented Lagrangian algorithm ALG2 involves building a sequence $(u^k, q^k, \sigma^k) \in \RR \times \RR^d \times \RR^d$ from initial data $(u^0, q^0, \sigma^0)$ as follows:
\begin{enumerate}
\item Minimization problem with respect to $u$:
\[
u^{k+1} := \text{argmin}_{u \in \RR^n} \left \{ \Ff(u) + \sigma^k \cdot \Lambda u + \frac{r}{2} |\nabla u - q^k |^2) \right \}
\]

That is equivalent to solve the variational formulation of Laplace equation
\[
-r(\nabla u^{k+1} - \text{div}(q^k)) = f + \text{div}(\sigma^k) \text{ in } \Omega
\]
with the Neumann boundary condition
\[
r \frac{\partial u^{k+1}}{\partial \nu} = r q^k \cdot \nu - \sigma^k \cdot \nu \text{ on } \partial \Omega.
\]

This is where we use the Galerkin discretization by finite elements.

\item Minimization problem with respect to $q$:
 \[
 q^{k+1} := \text{argmin}_{q \in \RR^d} \left \{ \Gf^*(q) - \sigma^k \cdot q + \frac{r}{2} |\nabla u^{k+1} - q |^2) \right \}
 \]

 \item Using the gradient ascent formula for $\sigma$
 \[
 \sigma^{k+1} = \sigma^k + r(\nabla u^{k+1} - q^{k+1}).
 \]

\begin{theo}
Given $r>0$. If there exists a solution to the primal-dual extremality relations \eqref{eq33} and $\Lambda$ has full column-rank then there exists an $(\bar{u}, \bar{\sigma}) \in \RR^n \times \RR^m$ satisfying \eqref{eq33} such that the sequence $(u^k, q^k, \sigma^k)$ generated by the ALG2-scheme above satisfies
\beq \label{eq39}
u^k \rightarrow \bar{u}, q^k \rightarrow \Lambda \bar{u}, \sigma^k \rightarrow \bar{\sigma} \text{ as } k \rightarrow +\infty.
\eeq
\end{theo}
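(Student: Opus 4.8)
The plan is to treat the ALG2 scheme as the Douglas--Rachford/ADMM iteration associated with the saddle-point formulation of \eqref{eq31}--\eqref{eq32}, and to exhibit a Lyapunov (Fej\'er-type) functional that decreases monotonically along the iteration. First I would record the first-order optimality conditions of the two partial minimizations. The $u$-update gives $-\Lambda^T[\sigma^k + r(\Lambda u^{k+1}-q^k)]\in\partial\Ff(u^{k+1})$, and using the dual update $\sigma^{k+1}=\sigma^k+r(\Lambda u^{k+1}-q^{k+1})$ this rewrites as $-\Lambda^T[\sigma^{k+1}+r(q^{k+1}-q^k)]\in\partial\Ff(u^{k+1})$; the $q$-update gives $\sigma^{k+1}\in\partial\Gf^*(q^{k+1})$. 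Writing $\bar q:=\Lambda\bar u$, the extremality relations \eqref{eq33} read $-\Lambda^T\bar\sigma\in\partial\Ff(\bar u)$ and $\bar\sigma\in\partial\Gf^*(\bar q)$.

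Next I would exploit the monotonicity of the maximal monotone operators $\partial\Ff$ and $\partial\Gf^*$ (both proper, convex, l.s.c.). Comparing the iterate inclusions with the fixed-point inclusions yields
\begin{align*}
\langle \sigma^{k+1}-\bar\sigma + r(q^{k+1}-q^k),\, \Lambda u^{k+1}-\bar q\rangle &\le 0,\\
\langle \sigma^{k+1}-\bar\sigma,\, q^{k+1}-\bar q\rangle &\ge 0,
\end{align*}
and, comparing the $q$-inclusions at two consecutive steps, $\langle\sigma^{k+1}-\sigma^k,\,q^{k+1}-q^k\rangle\ge 0$. Substituting the identity $\Lambda u^{k+1}-\bar q=(q^{k+1}-\bar q)+\frac1r(\sigma^{k+1}-\sigma^k)$ into the first inequality and expanding each inner product with the polarization identity, these combine into the one-step estimate
\[
V^k-V^{k+1}\ \ge\ \tfrac1r|\sigma^{k+1}-\sigma^k|^2 + r|q^{k+1}-q^k|^2 \ \ge\ 0,\qquad V^k:=\tfrac1r|\sigma^k-\bar\sigma|^2+r|q^k-\bar q|^2.
\]

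From here I would conclude by standard arguments. Since $V^k$ is nonincreasing and nonnegative it converges, so the telescoped right-hand side is summable; hence $q^{k+1}-q^k\to 0$, $\sigma^{k+1}-\sigma^k\to 0$, and in particular $\Lambda u^{k+1}-q^{k+1}=\frac1r(\sigma^{k+1}-\sigma^k)\to 0$. Boundedness of $V^k$ gives boundedness of $(q^k,\sigma^k)$; along a subsequence $(q^k,\sigma^k)\to(q^\infty,\sigma^\infty)$, and because $\Lambda$ has full column rank, $\Lambda^T\Lambda$ is invertible and $u^{k}=(\Lambda^T\Lambda)^{-1}\Lambda^T(\Lambda u^{k})$ converges to some $u^\infty$ with $\Lambda u^\infty=q^\infty$. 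Passing to the limit in the two inclusions, using closedness of the graphs of $\partial\Ff$ and $\partial\Gf^*$ together with $\sigma^{k+1}+r(q^{k+1}-q^k)\to\sigma^\infty$, shows that $(u^\infty,\sigma^\infty)$ satisfies \eqref{eq33}. Finally, taking this limit point as the reference $(\bar u,\bar\sigma)$ in the definition of $V^k$, the resulting Lyapunov sequence is nonincreasing and admits a subsequence tending to $0$, hence tends to $0$; this forces $q^k\to\bar q$ and $\sigma^k\to\bar\sigma$ for the whole sequence, and full column rank then upgrades $\Lambda u^k\to\bar q$ to $u^k\to\bar u$, which is \eqref{eq39}.

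The main obstacle, and the step demanding the most care, is fixing the signs in the Lyapunov estimate: after polarization one is left with the cross term $\langle q^{k+1}-q^k,\sigma^{k+1}-\sigma^k\rangle$, which is not manifestly nonnegative, and it is exactly the monotonicity inequality between two consecutive $q$-updates that disposes of it. The other delicate point is structural: monotonicity only ever controls $\Lambda u^k$ and $q^k$, never $u^k$ itself, so the full column-rank hypothesis on $\Lambda$ is indispensable for the last passage from $\Lambda u^k\to\bar q$ to $u^k\to\bar u$; without injectivity of $\Lambda$ one can only assert convergence of $(\Lambda u^k, q^k, \sigma^k)$.
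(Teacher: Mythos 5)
Your proof is correct, and it takes a genuinely different route from the paper: the paper offers no argument at all for this theorem, but simply invokes a general convergence result for splitting methods (Theorem 8 of \cite{eckstein1992douglas}), which obtains ALG2/ADMM convergence by identifying the scheme with a Douglas--Rachford splitting, i.e.\ a proximal point iteration for a maximal monotone operator on the dual side. What you have written out instead is the classical \emph{direct} argument (essentially that of Gabay and Fortin--Glowinski, the very works the paper cites as precursors): extract the subdifferential inclusions from the two partial minimizations, rewrite the $u$-inclusion using the multiplier update so that it involves $\sigma^{k+1}+r(q^{k+1}-q^k)$, compare with the saddle-point inclusions \eqref{eq33} via monotonicity of $\partial \Ff$ and $\partial \Gf^*$, and combine through the identity $\Lambda u^{k+1}-\Lambda\bar u=(q^{k+1}-\Lambda\bar u)+\tfrac1r(\sigma^{k+1}-\sigma^k)$ and polarization into the Fej\'er-type decrease of $V^k=\tfrac1r|\sigma^k-\bar\sigma|^2+r|q^k-\Lambda\bar u|^2$; I checked this algebra and the cross term is indeed killed exactly by the consecutive-step inequality $\langle\sigma^{k+1}-\sigma^k,q^{k+1}-q^k\rangle\ge0$. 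Your endgame (summability of increments, subsequential limit satisfying \eqref{eq33} by graph closedness of the subdifferentials, re-anchoring $V^k$ at that limit point to get convergence of the full sequence, and full column rank of $\Lambda$ to upgrade $\Lambda u^k\to\Lambda\bar u$ to $u^k\to\bar u$) is the standard correct conclusion. Two minor caveats: the inequality $\langle\sigma^{k+1}-\sigma^k,q^{k+1}-q^k\rangle\ge0$ rests on $\sigma^k\in\partial\Gf^*(q^k)$, which holds only for $k\ge1$ since $(q^0,\sigma^0)$ is arbitrary, so the decrease estimate should be asserted from the second iteration on; and you tacitly assume the two argmins exist at each step (harmless here, since full column rank of $\Lambda$ makes the $u$-subproblem coercive and the $q$-subproblem is strongly convex). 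As for what each approach buys: yours is self-contained and elementary, and yields quantitative by-products (summability of $\tfrac1r|\sigma^{k+1}-\sigma^k|^2+r|q^{k+1}-q^k|^2$, hence control of the primal residual $\Lambda u^k-q^k$), whereas the paper's citation buys brevity and the extra robustness of the abstract Douglas--Rachford framework, e.g.\ convergence under inexact subproblem solves and over-relaxation, which the bare Lyapunov computation does not immediately provide.
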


\end{enumerate}

We directly apply a general theorem whose proof can be found in \cite{eckstein1992douglas} (Theorem $8$), following contributions of \cite{fortin1983augmented, gabay1976dual, lions1979splitting} to the analysis of splitting methods.

\subsection{Numerical schemes and convergence study}

We use the software FreeFem++ (see \cite{MR3043640}) to implement the numerical scheme. We take the Lagrangian finite elements and notations used in \ref{desalg}, $P_2$ FE for $u_h$ and $P_1$ FE for $(q_h, \sigma_h)$. $\Lambda u_h$ is the projection on $P_1$ of the operator $\Lambda$, that is, $\nabla u_h$. The first step and the third one are always the same and only the second one varies with our different test cases. We indicate the numerical convergence of ALG2 iterations by the $\cdot^k$ superscript and the convergence of finite elements discretization by the $\cdot_h$ subscript. For our numerical simulations, we work with the space dimension $d=2$ and we choose for $\Omega$ a $2D$ square $(x=(x_1, x_2) \in [0,1]^2)$. We make tests with different $f$ :
\[
f_-^1 := e^{-40*((x_1-0.75)^2+(x_2-0.25)^2} \text{ and } f_+^1 := e^{-40*((x_1-0.25)^2+(x_2-0.65)^2)},
\]

\[
f_-^2 := e^{-40*((x_1-0.5)^2+(x_2-0.15)^2)} \text{ and } f_+^2 := e^{-40*((x_1-0.5)^2+(x_2-0.75)^2)},
\]
In the third case, we take $f_-^3$ a constant density and $f_+^3$ is the sum of three concentrated Gaussians
\begin{multline*}
f_+^3(x_1, x_2) = e^{-400*((x-0.25)^2+(y-0.75)^2)} +e^{-400*((x-0.35)^2+(y-0.15)^2)} \\
+ e^{-400*((x-0.85)^2+(y-0.7)^2)}.
\end{multline*}

We also make tests with non-constant $c_k$ :
\[
g^1(x_1, x_2) = 3 - 2*e^{-10*((x_1-0.5)^2+(y_2-0.5)^2)}.
\]
%and
%\[
%g^2(x_1,x_2)= 1+ 2 \ast \chi_{\{x_1 > \frac{1}{2} \}}.
%\]

As specified above, we use a triangulation of the unit square with $n=1/h$ element on each side. We use the following convergence criteria:
\begin{enumerate}
\item DIV.Error $ = \left ( \int_{\Omega_h} (\text{div} \sigma_h^k + f )^2 \right ) ^{1/2}$ is the $L^2$ error on the divergence constraint.

\item BND.Error $= \left ( \int_{\partial \Omega_h} (\sigma_h^k \cdot \nu)^2 \right)^{1/2}$ is the $L^2 (\partial \Omega_h)$ error on the Neumann boundary condition.

\item DUAL.Error $ = \max_{x_j} | \Gc(x_j, \sigma_h^k(x_j)) + \Gc^*(x_j, \nabla u_h^k(x_j)) - \nabla u_h^k(x_j) \cdot \sigma_h^k(x_j) |$ where the maximum is with respect to the vertices $x_j$.
\end{enumerate}
The first two criteria represent the optimality conditions for the minimization of the Lagrangian with respect to $u$ and the third one is for maximization with respect to $\sigma$.

We make tests for two models. In the first one, the directions are the same as in the cartesian model and the volume coefficients are not necessarily constant. In the second one, the directions are the same than in the hexagonal one and the volume coefficients are equal to $1$ (it is simpler to compute $\Gc(x, \sigma)$). That is, $v_k = \exp(ik \pi/3)$ and $\delta_k c_k = 1$ for $k=1,\dots,6$. We call these models still the cartesian one, the hexagonal one respectively. The cartesian one is much easier since we can separate variables. $\Gf = \Gf_1 + \Gf_2$ with $\Gf_i (x,q) = \frac{b_i}{p} (|q_i| - \delta_i c_i(x))^p_+$ so that the second step of ALG2 is equivalent to solve the pointwise problem
\[
\inf_{q} \frac{1}{p} (|q| - c(x))^p_+ + \frac{r}{2} |q - \tilde{q}^k|^2
\]
where $\tilde{q}^k = \nabla u^{k+1} + \frac{\sigma^k}{r}$. This amounts to set $q^{k+1} = \lambda \tilde{q}^k$ and to solve this equation in $\lambda$
\[
(\lambda |\tilde{q}^k| - c(x))^{p-1}_+ + r \lambda |\tilde{q}^k| = r |\tilde{q}^k| = 0
\]
with $\lambda \geq 0$.
 We can use the dichotomy algorithm.

 For the hexagonal one, we use Newton's method. Since the function of which we seek the minimizer has its Hessian matrix that is definite positive, we can use the inverse of this Hessian matrix.

 We show the results of numerical simulations after $200$ iterations for both models.

\begin{table}
\begin{tabular}{|c|c|c|c|c|}
\hline Test case & DIV.Error & BND.Error & DUAL.Error & Time execution (seconds) \\
\hline 1 & 8.4745e-05 & 0 & 3.6126e-06 & 436 \\
\hline 2 & 2.2536e-05 & 8.8705e-04 & 3.0663e-05 & 4764 \\
\hline 3 & 5.2141e-05 & 1.4736e-04 & 1.1556e-02 & 792 \\
\hline 4 & 1.1823e-05 & 7.6776e-04 & 8.7412e-06 & 170 \\
\hline 5 & 1.1629e-05 & 0 & 9.7498e-04 & 285 \\
\hline 6 & 3.1544e-04 & 1.0958 & 7.8350e-07 & 445 \\
\hline 7 & 4.1373e-04 & 1.1710 & 4.8113e-04 & 4657 \\
\hline
\end{tabular}
\caption{Convergence of the finite element discretization for all test cases.}
\end{table}

We notice that length of arrows are proportional to transport density. Level curves correspond to the density term of the source/sink data to be transported. In \ref{casf2}, the case $p=1.01$ means that there is much congestion. The case $p=2$ is reasonable congestion and in the last one $p=100$, there is little congestion.  When there are obstacles, the criteria BND.Error is not very good. Indeed, the flow comes right on the obstacle and it turns fast. In the other side of the obstacle, the flow is tangent to the border. Many other cases may of course be examined (other boundary conditions, obstacles, coefficients depending on $x$, different exponents $p$ for the different components of the flow...).

\begin{figure}
\includegraphics[scale=0.461]{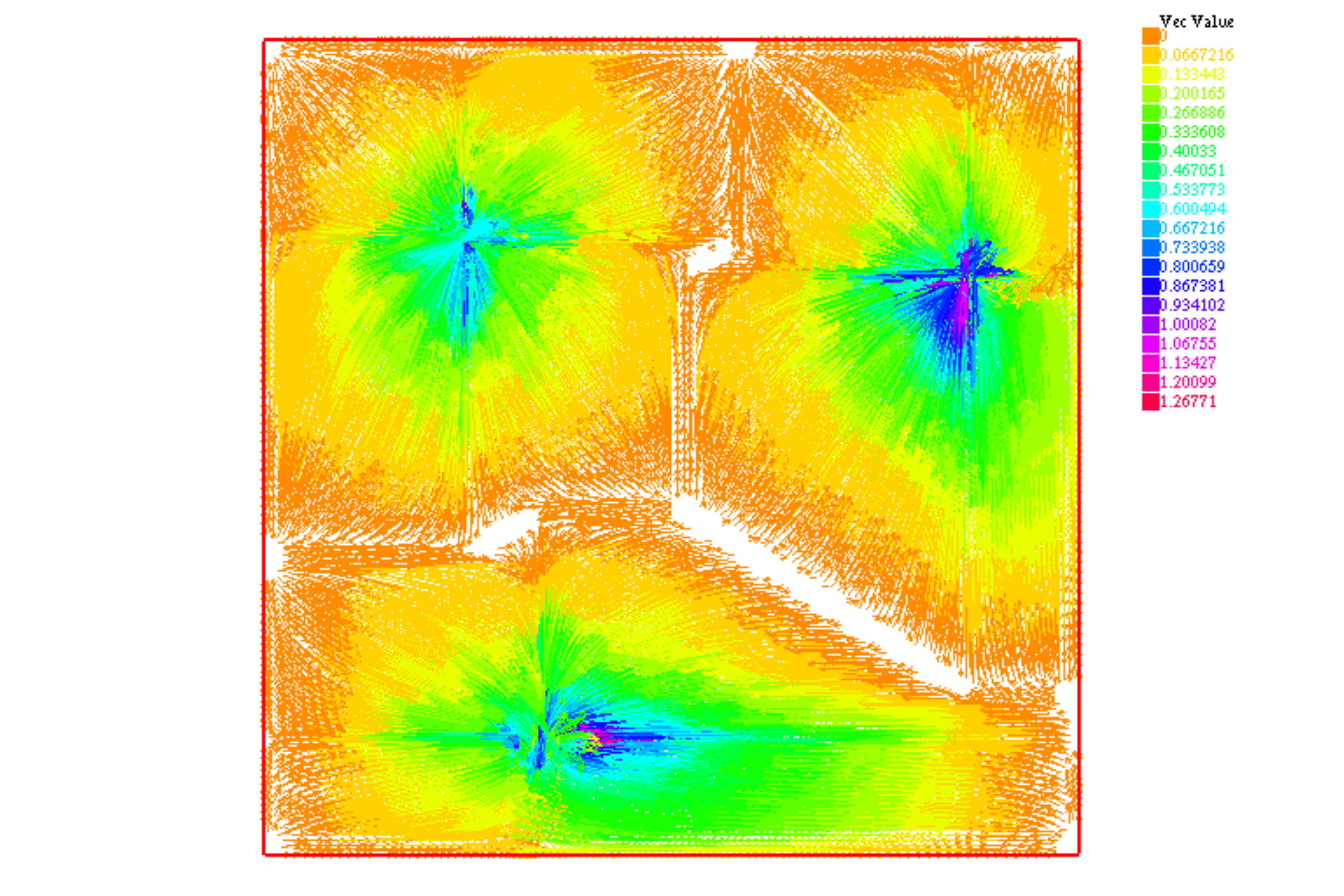}
\caption{Test case 1 : cartesian case ($d=2$) with $f=f^3$, $c_k$ constant and $p=10$.}
\end{figure}

\begin{figure}[htbp]
\hspace*{-1.3cm}
\centering
\includegraphics[scale=0.461]{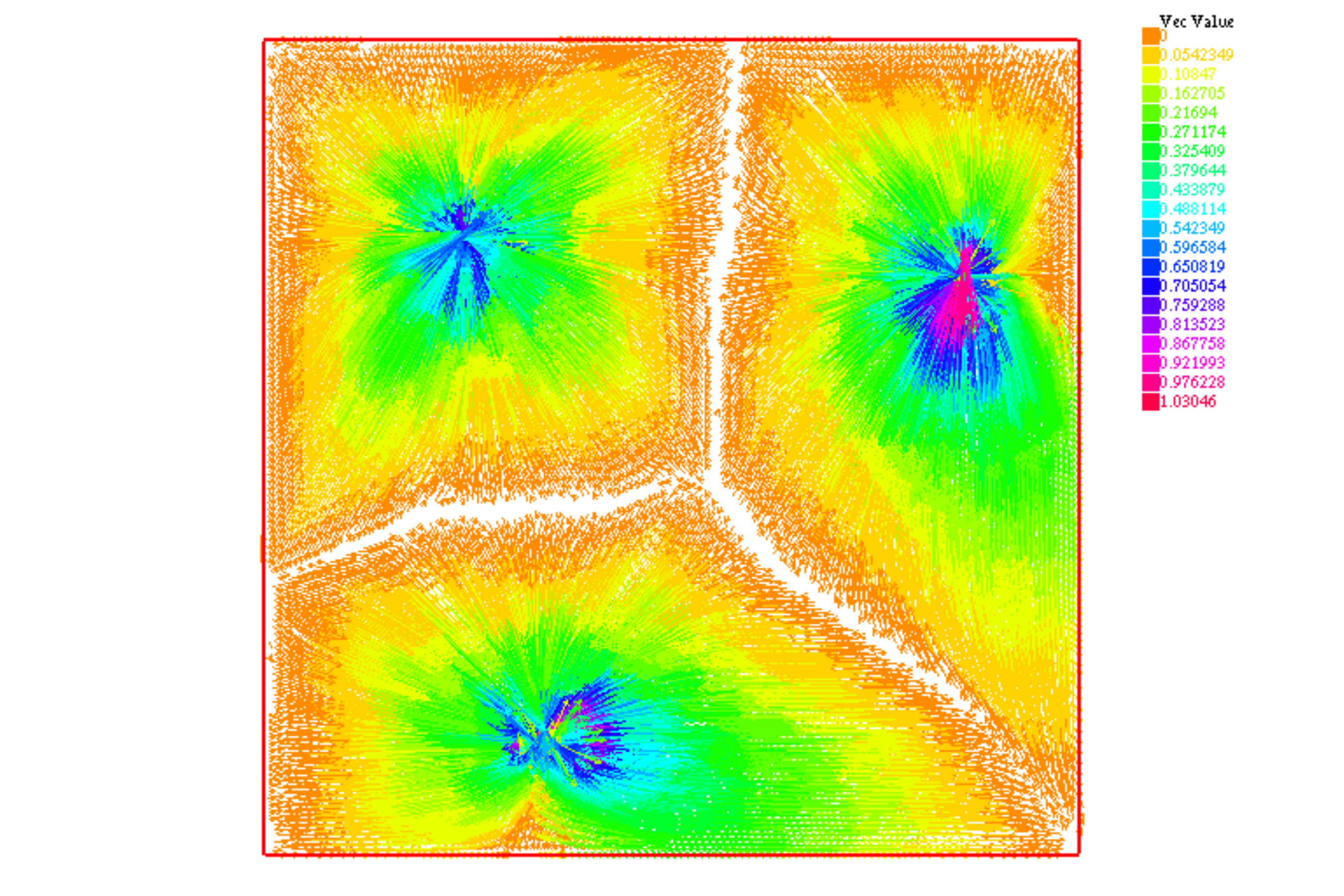}
\caption{Test case 2 : hexagonal case ($d=2$) with $f=f^3$, $c_k$ constant and $p=3$.}
\end{figure}

\begin{figure}
\includegraphics[scale=0.35]{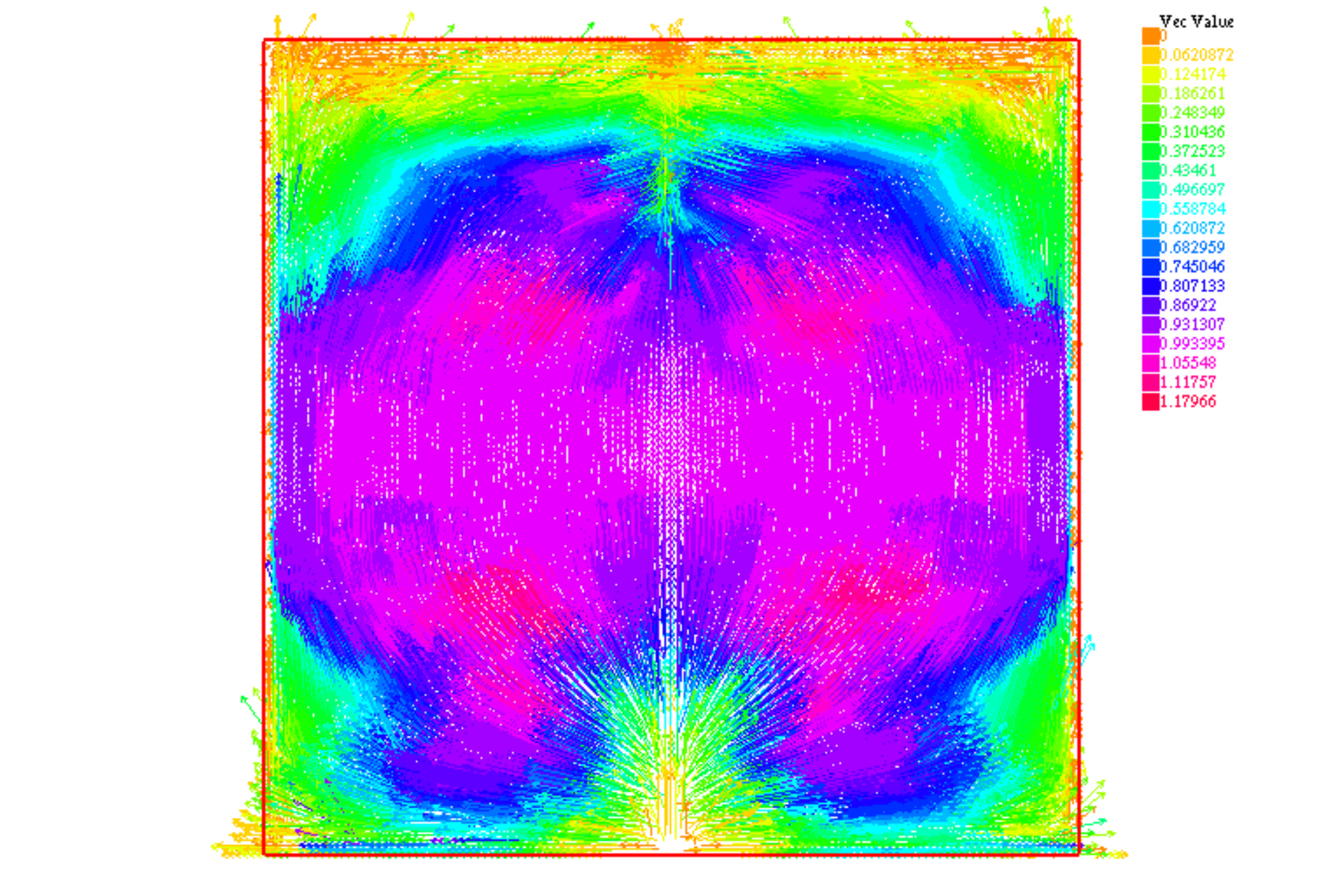}
\includegraphics[scale=0.35]{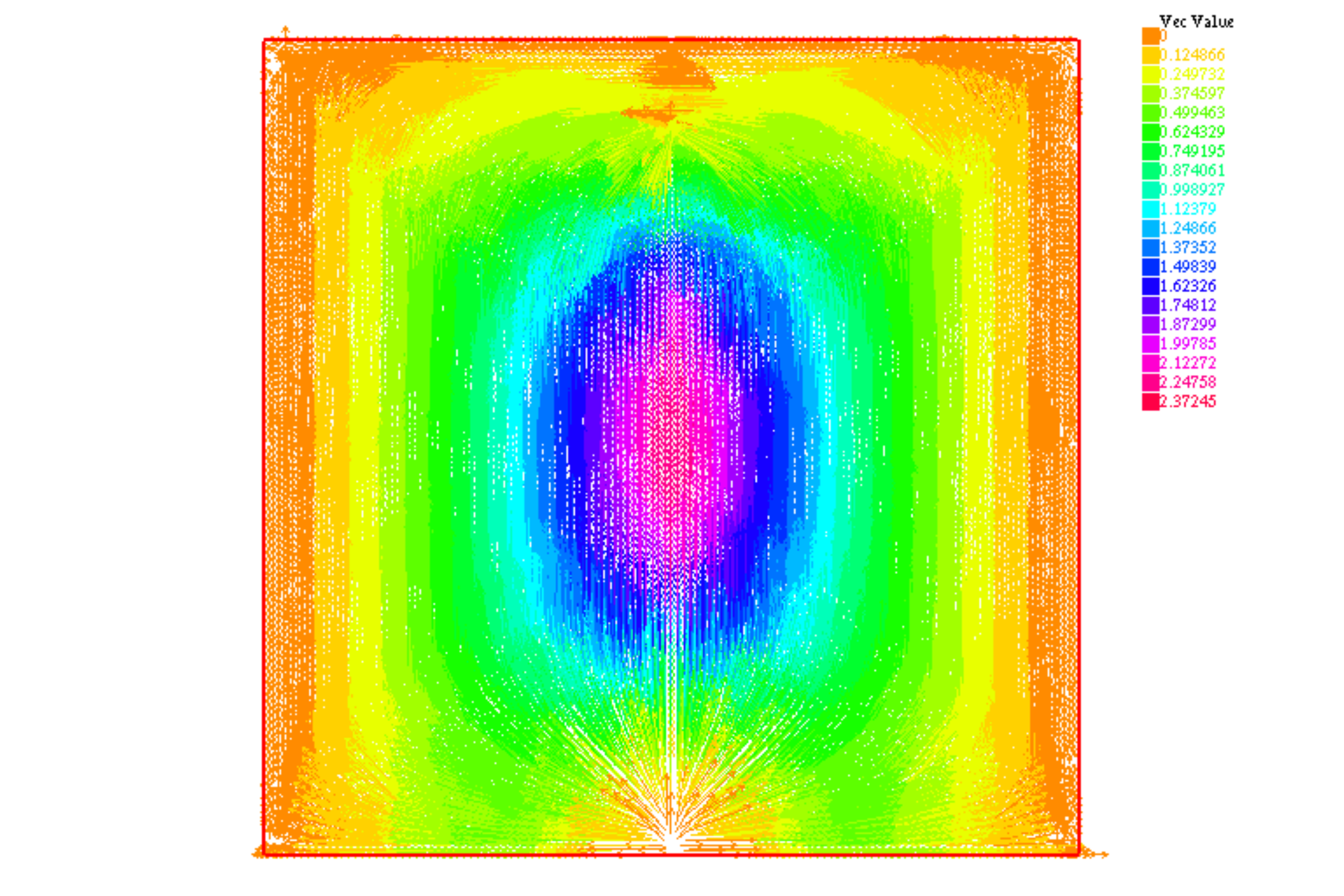}
\includegraphics[scale=0.35]{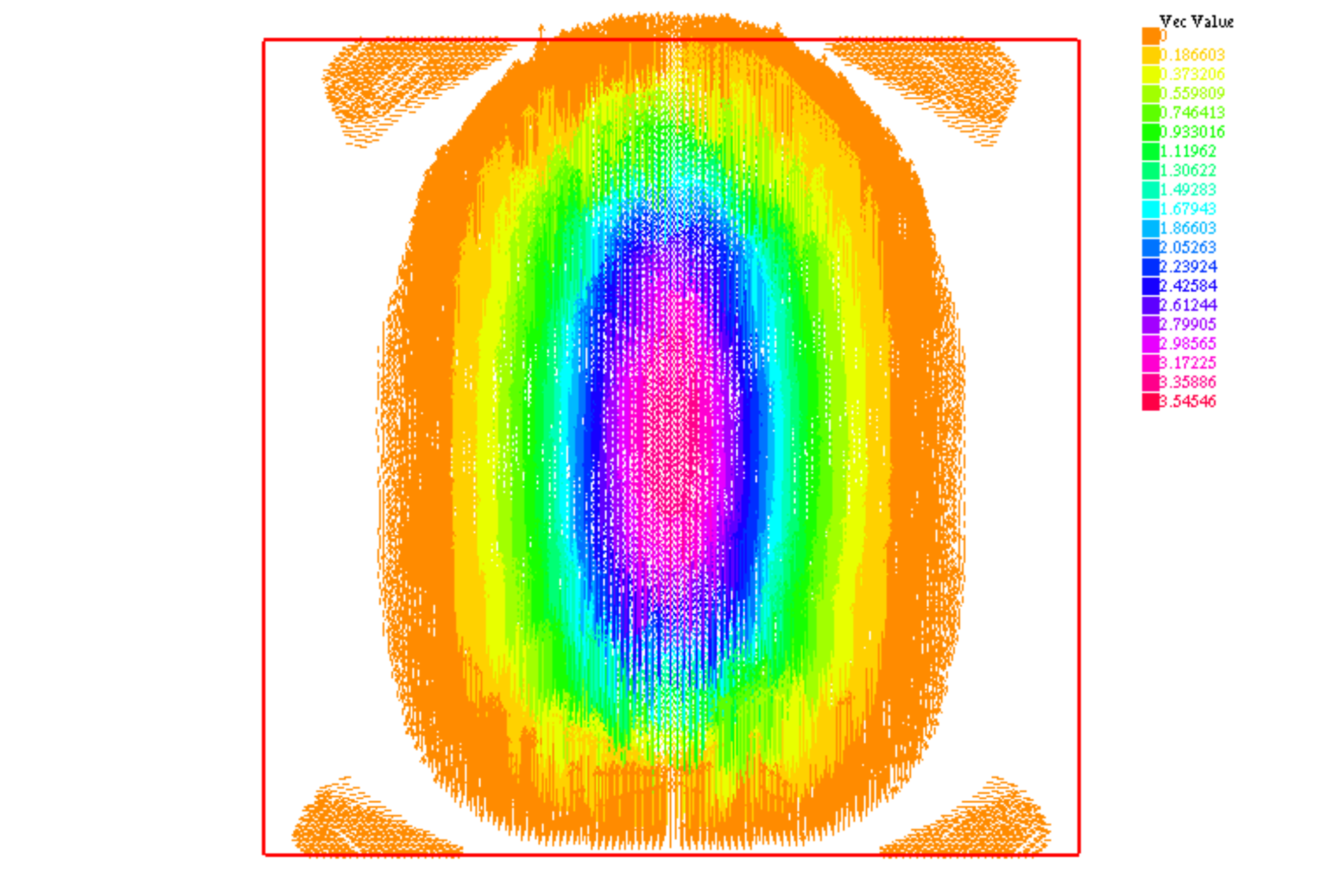}
\caption{Test cases 3, 4 and 5: cartesian case ($d=2$) with $f=f^2$, $c_k$ constant and $p=1.01, 2, 100$.}
\label{casf2}
\end{figure}

\begin{figure}
\includegraphics[scale=0.461]{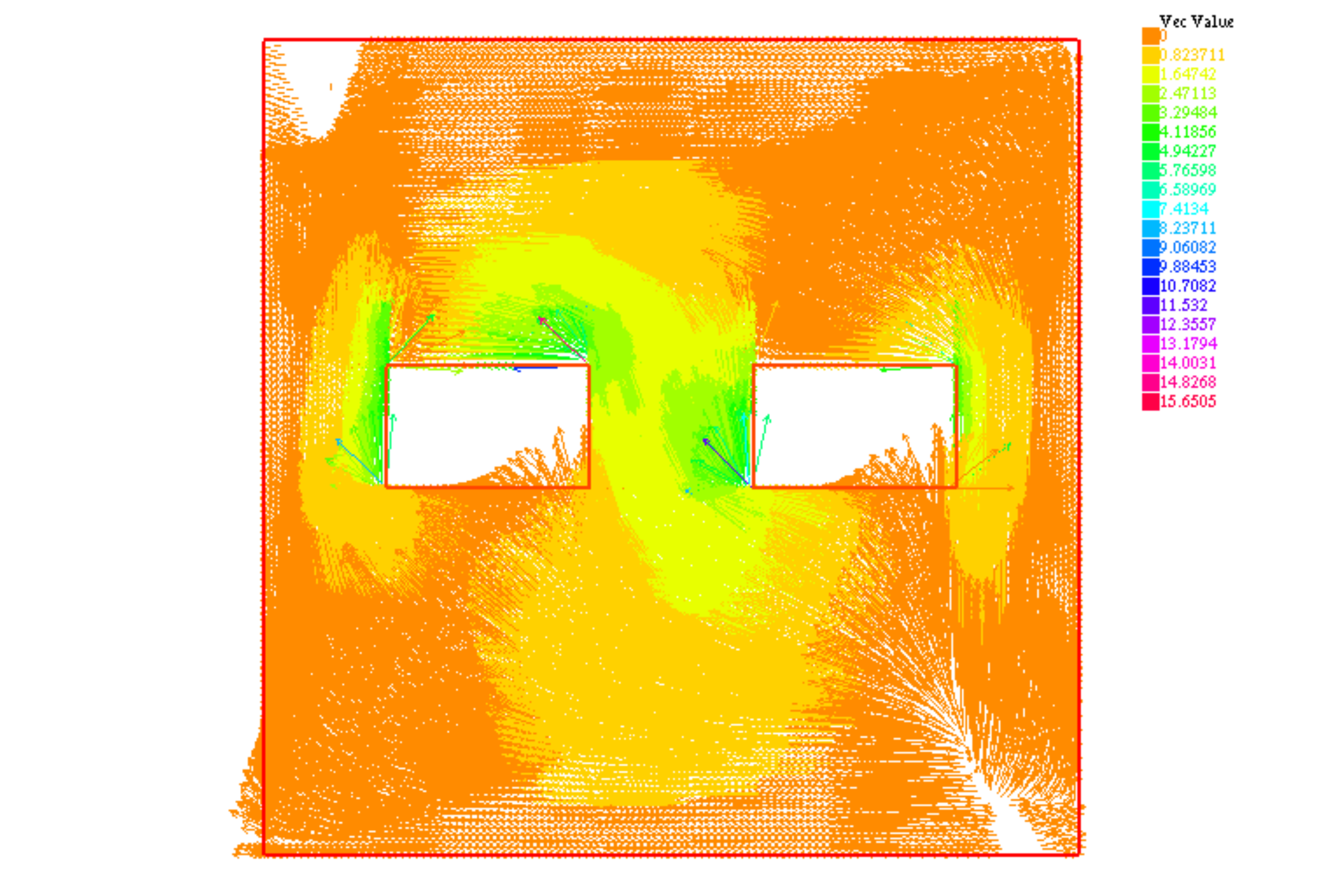}
\caption{Test case 6 : cartesian case ($d=2$) with $f=f^1$, $c_k=g^2$, $p=3$ and two obstacles.}
\end{figure}

\begin{figure}[htbp]
%\hspace*{-1.3cm}
%\centering
\includegraphics[scale=0.461]{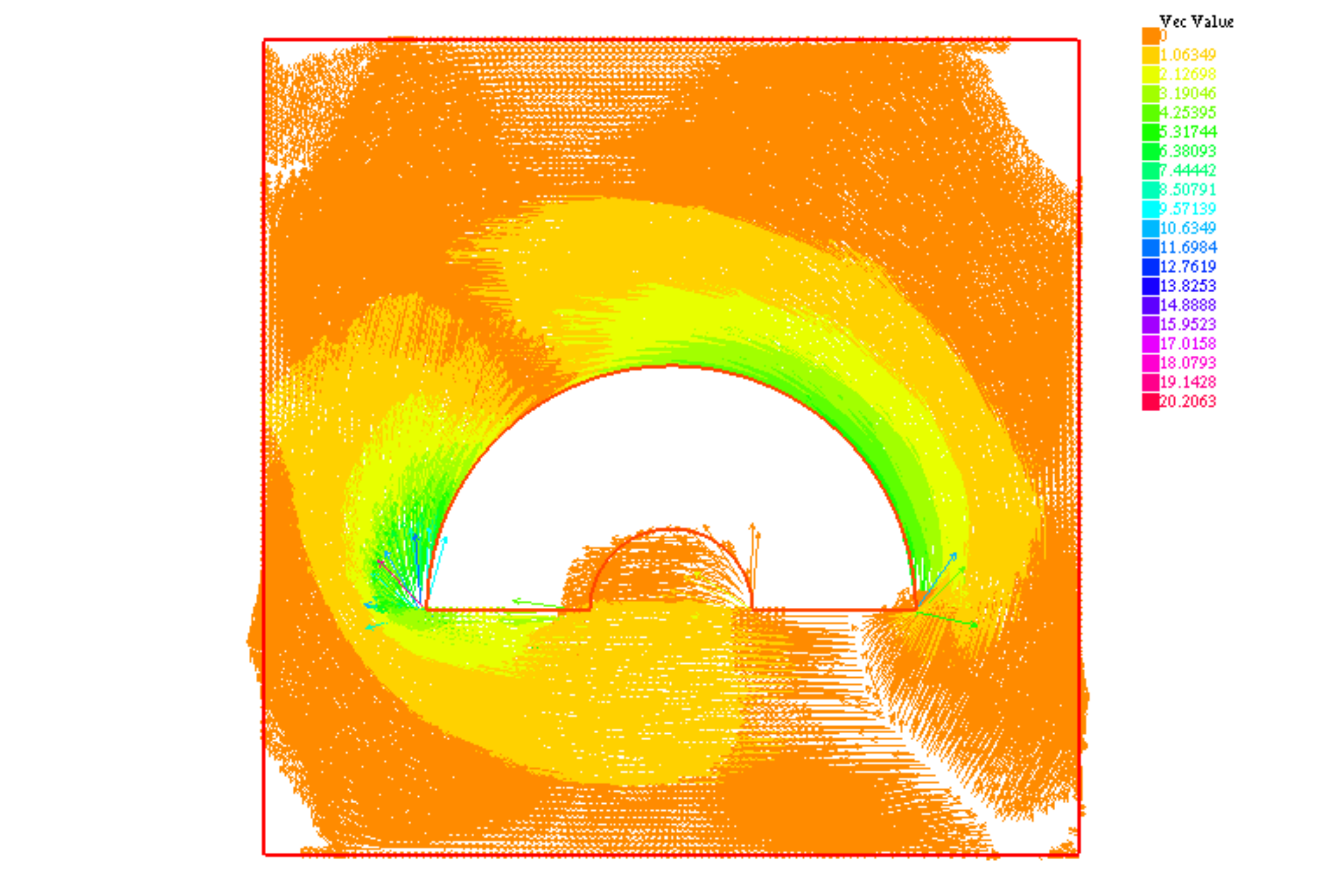}
\caption{Test case 7 : hexagonal case ($d=2$) with $f=f^1$, $c_k$ constant, $p=3$ and an obstacle.}
\end{figure}

\textbf{Acknowledgements} The author would like to thank Guillaume Carlier for his extensive help and advice as well as Jean-David Benamou and Ahmed-Amine Homman for their explanations on FreeFem ++.

\bibliographystyle{plain}
\bibliography{generalref}

\end{document}